\documentclass{amsart}[12pt]
\usepackage{amsmath, amsthm, amscd, amsfonts,}
\usepackage{graphicx,float}
\usepackage{lineno,hyperref}
\usepackage{amssymb}

\DeclareMathOperator{\dom}{dom}

\DeclareMathOperator{\Ult}{Ult}

\def\MPB{{\mathbb{P}}}
\def\MQB{{\mathbb{Q}}}
\def\MRB{{\mathbb{R}}}

\setlength{\textheight}{22cm} \setlength{\textwidth}{14cm}
\setlength{\oddsidemargin}{1cm} \setlength{\evensidemargin}{1cm}
%\itshape%SHK 24/2

\newtheorem{theorem}{Theorem}[section]
\newtheorem{lemma}[theorem]{Lemma}

\newtheorem{corollary}[theorem]{Corollary}

 %added
%\theoremstyle{definition}
\newtheorem{definition}[theorem]{Definition}

\newtheorem{claim}[theorem]{Claim}
\numberwithin{equation}{section}

\def\MPB{{\mathbb{P}}}
\def\MQB{{\mathbb{Q}}}
\def\MRB{{\mathbb{R}}}

\def\rmark{\mbox{$\rm\bf\rule{0.06em}{1.45ex}\kern-0.05em R$}}
\def\pmark{\mbox{$\rm\bf\rule{0.06em}{1.45ex}\kern-0.05em P$}}
\def\nmark{\mbox{$\rm\bf\rule{0.06em}{1.45ex}\kern-0.05em N$}}
\def\vdash{\mbox{$\rm\| \kern-0.13em -$}}

\usepackage{pb-diagram}

\def\rmark{\mbox{$\rm\bf\rule{0.06em}{1.45ex}\kern-0.05em R$}}
\def\pmark{\mbox{$\rm\bf\rule{0.06em}{1.45ex}\kern-0.05em P$}}
\def\nmark{\mbox{$\rm\bf\rule{0.06em}{1.45ex}\kern-0.05em N$}}
\def\vdash{\mbox{$\rm\| \kern-0.13em -$}}

\begin{document}

\nocite{*}

\title[The tree property at double successors of singular cardinals]{The tree property at  double successors of singular cardinals of uncountable cofinality}

\author[M. golshani and R. Mohammadpour]{Mohammad Golshani and Rahman Mohammadpour}

%\thanks{The first author's research has been supported by a grant from IPM (No. 96030417).}
%\thanks{The second author thanks support by Institute for research in fundamental science(IPM) where he served as a researcher assistant.}

 \maketitle

\begin{abstract}
Assuming the existence of a strong cardinal $\kappa$ and a measurable cardinal above it, we force  a generic extension in which $\kappa$ is a singular strong limit cardinal of any given cofinality, and such that the tree property holds at $\kappa^{++}$.
\end{abstract}

\maketitle
\section{Introduction}
Infinite trees are of fundamental importance in modern set theory. In this paper, we are interested in $\kappa$-Aronszajn trees.
 Recall that a $\kappa$-tree $T$ is called  $\kappa$-Aronszajn, if it has no cofinal branches. The tree property  at an infinite cardinal $\kappa$, denoted $TP(\kappa)$, is the assertion
`` there are no $\kappa$-Aronszajn trees''. There are various results concerning models of the tree property at one or infinitely many cardinals. One of these results that is of our interest in this paper, is a theorem of Cummings and  Foreman \cite{CF}, who produced - relative to the existence of a supercompact cardinal $\kappa$ and a weakly compact cardinal above it - a model in which $\kappa$ is a singular strong limit  cardinal of countable cofinality, and the tree property holds at $\kappa^{++}$. They also stated the the same result for the case $\kappa=\aleph_{\omega+2}$. Recently, Friedman and Halilovi\'c \cite{FHa} obtained the same results, by employing a weaker large cardinal assumption. There are various generalizations of the above-mentioned results, see for example \cite{5A}, \cite{FH}, \cite{FHS1}, and \cite{FHS2}.

Our motivation for this paper is that all papers mentioned in the previous paragraph are merely covering singular cardinals of countable cofinality; and moreover, it is quite natural to ask if the same results can be proved for singular cardinals of uncountable cofinality. In this paper, we start the first step towards answering this question by extending the above cited theorem of Cummings and Foreman
 to singular cardinals of uncountable cofinality, by proving the following theorem:
\begin{theorem}[Main theorem]
Assume that $\kappa$ is a  strong cardinal, and $\lambda > \kappa$ is a measurable cardinal. Suppose that $\delta < \kappa$ is regular. Then there is a generic extension of the universe in which the following hold:
\begin{enumerate}
\item $2^{\kappa}=\kappa^{++}=\lambda$.
\item $\kappa$ is strong limit singular and $\rm{cof}(\kappa)=\delta$.
\item $TP(\kappa^{++})$.
\end{enumerate}
\end{theorem}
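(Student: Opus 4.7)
The plan is to mirror the three-stage construction of Cummings--Foreman \cite{CF} --- a preparation, a Mitchell-type collapse of $\lambda$ to $\kappa^{++}$, and a singularizing forcing at $\kappa$ --- but to replace the countable-cofinality Prikry forcing used in \cite{CF} by a Magidor-style forcing producing cofinality $\delta$. First I would perform a preparatory forcing $\mathbb{P}_0$ over $V$ that (i) renders the strongness of $\kappa$ indestructible under the subsequent forcings we intend to use, (ii) arranges a sufficiently coherent sequence of $(\kappa,\cdot)$-extenders (or Mitchell-increasing normal measures on $\kappa$) of length at least $\delta$ to drive a Magidor-style singularization of $\kappa$ to cofinality $\delta$, and (iii) preserves the measurability of $\lambda$. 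A suitably modified Laver or fast-function preparation, carried out strictly below $\lambda$ and respecting enough closure, should accomplish this; the strongness of $\kappa$ in $V$ is what delivers the coherent extender/measure sequence, which a single normal measure (as in the $\cf(\kappa)=\omega$ case) does not provide.

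Over the prepared model I would then force with a Mitchell-style product $\mathbb{P}_1$ of the form $\mathrm{Add}(\kappa,\lambda) \times \Col(\kappa^{+},{<}\lambda)$ (with the appropriate support), so that in $V[G_0][G_1]$ one has $2^{\kappa}=\lambda=\kappa^{++}$, $\kappa$ remains strong in a working sense, and the embedding witnessing measurability of $\lambda$ lifts. Finally, over $V[G_0][G_1]$ I would apply a Magidor-type singularization $\mathbb{M}(\kappa,\delta)$ extracted from the coherent extender sequence preserved through the preparation, so as to change $\cf(\kappa)$ to $\delta$ without adding bounded subsets of $\kappa$ and without collapsing $\kappa^{++}$. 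Clauses $(1)$ and $(2)$ of the theorem then follow by standard cardinal-arithmetic and Prikry-property bookkeeping.

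For the tree property at $\kappa^{++}=\lambda$, I would argue by contradiction. Let $T$ be a purported $\kappa^{++}$-Aronszajn tree in $V[G_0][G_1][G_2]$. Using the measurability of $\lambda$ in $V$, fix an embedding $j\colon V \to M$ with $\crit(j)=\lambda$ and $M^{\lambda} \subseteq M$, and lift $j$ stage by stage to $j^{*}\colon V[G_0][G_1][G_2] \to M[H_0][H_1][H_2]$. Any node of $j^{*}(T)$ at level $\lambda$ then induces, after the standard reconstruction, a cofinal branch of $T$ --- provided $\mathbb{P}_1 \ast \dot{\mathbb{M}}(\kappa,\delta)$ enjoys the appropriate branch-preservation / $\kappa^{+}$-approximation property. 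This contradicts the Aronszajn-ness of $T$ and yields clause $(3)$.

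The main obstacles I foresee are twofold. First, the Magidor forcing for uncountable cofinality is substantially more delicate than Prikry forcing, and verifying that it, in combination with the Mitchell collapse, adds no new cofinal branches to the relevant $\kappa^{++}$-trees requires an analysis in the spirit of the branch lemmas of Unger and Sinapova, now adapted from a single normal measure to a coherent extender/measure sequence. Second, lifting $j$ through $\mathbb{M}(\kappa,\delta)$ is non-trivial: Prikry-style homogeneity is no longer available, and one must build a compatible generic for $j(\mathbb{M}(\kappa,\delta))$ in (an extension of) $M$ by assembling a master condition aligned with the extender sequence fixed during the preparation, then threading a generic through the relevant dense sets using the closure of $M$. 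These two technical points --- the branch lemma for Magidor plus Mitchell collapse, and the master-condition lift of $j$ through the Magidor part --- are what distinguish the present theorem from its $\cf(\kappa)=\omega$ predecessors and constitute the core of the proof.
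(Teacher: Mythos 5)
Your high-level skeleton --- an indestructibility preparation, a Mitchell-type forcing making $2^\kappa=\lambda=\kappa^{++}$, a Magidor-style singularization to cofinality $\delta$, and a lifted embedding with $\crit(j)=\lambda$ plus a branch-preservation argument --- matches the paper. But your architecture has a genuine gap: you force the collapse first and the Magidor forcing last, as separate stages. The paper instead interleaves them: conditions of $\mathbb{R}$ are triples $(p,\dot q,r)$ with $(p,\dot q)\in\mathbb{P}\ast\dot{\mathbb{Q}}$ and, for $\xi$ in a measure-one set $\Lambda$ of reflecting points (Lemma 3.2), $r(\xi)$ is a $(\mathbb{P}\upharpoonright\xi)\ast\dot{\mathbb{Q}}_\xi$-name for a condition in $\mathrm{Add}(\kappa^+,1)$ as computed in $V[G_{\mathbb{P}}\upharpoonright\xi][\mathrm{X}]$, i.e.\ the collapse components are allowed to depend on the Magidor club $\mathrm{X}$. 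This is precisely what makes the term forcing $\mathbb{U}^*$ of Lemma 5.2 $\kappa^+$-closed \emph{in the final model} $V[G_{\mathbb{R}}]$, which is what the closed-forcing branch lemma consumes in Lemma 5.7. In your sequential design the closed collapse part is $\kappa^+$-closed only in the model before the Magidor stage; after $\kappa$ is singularized it retains merely $\kappa^+$-distributivity (by Easton's lemma, since the Magidor forcing is $\kappa^+$-c.c.), and distributivity is not enough to run the splitting argument of the branch lemma (a distributive forcing can add a branch --- forcing with a Suslin tree is the standard example). This interleaving, not a new branch lemma ``in the spirit of Unger--Sinapova,'' is the first real point of the construction.

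Your second declared obstacle --- a master-condition lift of $j$ through the Magidor forcing --- is a misdiagnosis. Since $\crit(j)=\lambda>\kappa$, $j$ fixes every Magidor condition, and Mitchell's characterization of Magidor genericity (Theorem 2.8) shows that any $\mathbb{Q}^{M}_{j(\mathcal{U})}$-generic sequence over $V[G_{j(\mathbb{P})}]$ is automatically $\mathbb{Q}_{\mathcal{U}}$-generic over $V[G_{\mathbb{P}}]$, with $j(\mathrm{X})=\mathrm{X}$; no homogeneity or master condition enters. (Nor can you ``thread a generic for $j(\mathbb{R})$ using the closure of $M$'': $j(\mathbb{R})$ has $j(\lambda)$-many dense sets, so the paper simply forces with the quotient $j(\mathbb{R})/G_{\mathbb{R}}$, finds the branch in that further extension, and then pulls it back.) The real technical core, which your proposal leaves open, is Lemma 5.3: in $V^{\mathbb{R}\ast\dot{\mathbb{U}}^*}$ the quotient $(j(\mathbb{P})\ast\dot{\mathbb{Q}}^{M}_{j(\mathcal{U})})/G_{\mathbb{P}\ast\dot{\mathbb{Q}}}$ is $\kappa^+$-Knaster, proved via an explicit compatibility criterion for Magidor conditions across the two models (Claim 5.4) together with the covering $\Delta$-system lemma from \cite{CF} (Lemma 5.5). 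Two smaller corrections: Magidor forcing with uncountable $\delta$ \emph{does} add bounded subsets of $\kappa$ (limit initial segments of the generic club are new), so your ``without adding bounded subsets of $\kappa$'' claim is false --- the preservation facts come instead from the factorization lemma (Lemma 2.7); and since $\mathrm{Add}(\kappa,\lambda)$ destroys all ground-model measures on $\kappa$, the coherent sequence cannot be ``preserved through the preparation'' --- it must be chosen in $V[G_{\mathbb{P}}]$ using the Woodin/Gitik--Shelah indestructibility of strongness, with the normal measure $D$ on $\lambda$ and $j$ fixed in $V$ beforehand.
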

The structure of the paper is as follows. In Section 2, we review the definition and the basic properties of Magidor forcing. In Section 3, we present some preliminary results concerning our model. The main forcing construction is then presented in Section 4, where we show that it yields a model in which parts (1) and (2) of Theorem 1.1 are satisfied. Finally, in Section 5, we prove that $TP(\kappa^{++})$ holds in this model as well.
\section{Coherent Sequences and Magidor Forcing}
Magidor forcing for changing cofinality of a large cardinal $\kappa$ to some regular $\delta<\kappa$ was originally defined by Magidor in \cite{Ma} using a Mitchell-increasing sequence of measures over $\kappa$ of length $\delta.$ Subsequentlly, Mitchell \cite{Mi}, defined Radin forcing (of which Magidor forcing is a special case) using coherent sequences of measures. The interested reader could consult \cite{G} for more details.
\begin{definition}[Coherent sequence]
A coherent sequence of measures $\mathcal{U}$ is a function with domain of the form $\{(\alpha,\beta):~\alpha<l^{\mathcal{U}}~\mathrm{and}~\beta<o^{\mathcal{U}}(\alpha)\}$, where $o^{\mathcal{U}}(\alpha)$ is Mitchell order of $\mathcal U$ at $\alpha$ such that the following conditions hold for every $(\alpha,\beta)\in dom(\mathcal U)$.

\begin{enumerate}
\item
$\mathcal{U}(\alpha,\beta)$ is a normal ultrafilter over $\alpha$,
\item
If $j^{\alpha}_{\beta}:V\longrightarrow Ult(V,\mathcal{U}(\alpha,\beta))$ is the canonical embedding, then $$j^{\alpha}_{\beta}(\mathcal{U})\upharpoonright\alpha+1=\mathcal{U}\upharpoonright(\alpha,\beta)$$
\end{enumerate}
where
$$\mathcal{U}\upharpoonright\alpha=\mathcal{U}\upharpoonright\{(\alpha',\beta'):\alpha'<\alpha~\mathrm{and}~\beta'<o^{\mathcal{U}}(\alpha')\}$$
and $$\mathcal{U}\upharpoonright(\alpha,\beta)=\mathcal{U}\upharpoonright\{(\alpha',\beta'):(\alpha'<\alpha~\mathrm{and}~\beta'<o^{\mathcal{U}}(\alpha'))~\mathrm{or}~(\alpha=\alpha'~\mathrm{and}~\beta'<\beta)\}.$$
The ordinal $l^{\mathcal U}$ is called the length of $\mathcal{U}$.
\end{definition}
\begin{definition}
Let $\mathcal{U}=\{\mathcal{U}(\alpha,\beta):\alpha<\kappa+1, \beta<o^{\mathcal{U}}(\alpha)\}$ be a coherent sequence of measures of length $\kappa+1$ and $o^{\mathcal{U}}(\kappa)=\delta$. Define the function $\mathcal{F}:\kappa+1 \longrightarrow V$ by\\
\[  \mathcal{F}(\alpha)= \left\{
\begin{array}{ll}
 \bigcap\limits_{\beta<o^{\mathcal{U}}(\alpha)}\mathcal{U}(\alpha,\beta) &~~~ \text{if}~~~ o^{\mathcal{U}}(\alpha)> 0\\
   & \\
    \{\emptyset\}&  \text{if}~~~o^{\mathcal{U}}(\alpha)=0 \\
\end{array}
\right. \]
\end{definition}
Note that $\mathcal{F}(\alpha)$ is a normal $\alpha$-complete filter over $\alpha$ if $o^{\mathcal{U}}(\alpha)> 0$. We are now ready to define the Magidor forcing.
\begin{definition}
Assume $\mathcal{U}$ is a coherent sequence of measures of length $\kappa+1$ and $o^{\mathcal{U}}(\kappa)=\delta$ is a limit ordinal.
\begin{enumerate}
\item [(a)] The Magidor forcing relative to $\mathcal{U}$, denoted $\mathbb{Q}_{\mathcal{U}},$ consists of finite sequences of the form $p=\langle \langle\alpha_0,A_0\rangle,\dots,\langle\alpha_n,A_n\rangle\rangle$ where:
\begin{enumerate}
\item $\delta < \alpha_0<\dots<\alpha_n=\kappa$,
\item $A_i\in\mathcal{F}(\alpha_i)$,
\item $A_i\cap \alpha_{i-1}=\emptyset$ (where $\alpha_{-1}=\delta+1$).
\end{enumerate}

\item [(b)] Let $p=\langle \langle\alpha_0,A_0\rangle,\dots,\langle\alpha_n,A_n\rangle\rangle$ and $q=\langle \langle\beta_0,B_0\rangle,\dots,\langle\beta_m,B_m\rangle\rangle$ be two conditions in $\mathbb{Q}_{\mathcal{U}}$. We say $q$ is stronger than $p$ ($q\leq p$) if
\begin{enumerate}
\item $m\geq n$,
\item $\forall i\leq n~~\exists j\leq m$ such that $\alpha_i=\beta_j$ and $B_j\subseteq A_i$,
\item $\forall j$ such that $ \beta_j\notin \{\alpha_1,\dots,\alpha_n\},~~B_j\subseteq A_k\cap \beta_j$ and $ \beta_j\in A_k$, where $k$ is the least index such that $\beta_j<\alpha_k$.
\end{enumerate}
\item [(c)]  $q$ is the direct extension (Prikry extension) of $p~(q\leq^* p)$  if
\begin{enumerate}
\item $q\leq p$,
\item $m=n$.
\end{enumerate}
\end{enumerate}
\end{definition}
Given $p \in \MQB_{\mathcal{U}},$ we denote it by
\[
p= \langle  \langle \alpha_0^p, A_0^p  \rangle, \dots,   \langle \alpha_{n^p}^p, A_{n^p}^p\rangle    \rangle,
\]
and call $n^p$ the length of $p$. Note that each $p \in \MQB_{\mathcal{U}}$ can be written as $p=d_p^\frown \langle \alpha_{n^p}^p, A_{n^p}^p\rangle,$
where  $d_p=\langle \langle\alpha_0,A_0\rangle,\dots,\langle\alpha_{n^p-1}^p,A_{n^p-1}^p\rangle\rangle \in V_\kappa$ is the stem of $p$.  The next lemma follows from the fact that any two conditions with the same stem are compatible.
\begin{lemma}
The forcing $(\MQB_{\mathcal{U}}, \leq)$ satisfies the $\kappa^+$-c.c.
\end{lemma}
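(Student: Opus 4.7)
The plan is to build directly on the hint preceding the lemma: any two conditions with the same stem are compatible. Once this is in place, the $\kappa^{+}$-c.c.\ will follow from a simple pigeonhole argument on stems, using only that $\kappa$ is inaccessible.

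First I would verify the compatibility claim. Let $p = d^{\frown}\langle\kappa, A\rangle$ and $q = d^{\frown}\langle\kappa, B\rangle$ share the same stem $d$. The natural candidate for a common extension is $r := d^{\frown}\langle\kappa, A\cap B\rangle$, and the only point that requires checking is that $A\cap B \in \mathcal{F}(\kappa)$. Since $o^{\mathcal{U}}(\kappa) = \delta < \kappa$ and each $\mathcal{U}(\kappa,\beta)$ is a normal, hence $\kappa$-complete, ultrafilter on $\kappa$, the filter $\mathcal{F}(\kappa) = \bigcap_{\beta<\delta}\mathcal{U}(\kappa,\beta)$ is itself $\kappa$-complete, being an intersection of fewer than $\kappa$ many $\kappa$-complete filters. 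In particular it is closed under finite intersections, so $r$ is a legitimate condition, and one checks directly from Definition 2.3(b) that $r \le p$ and $r \le q$: the ordinals match those of $p$ and $q$, the stem is unchanged, and the new top measure-one set is contained in both $A$ and $B$, while clause (3) of Definition 2.3(b) is vacuous because no new ordinals appear.

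Next I would bound the number of stems. Each stem is a finite sequence of pairs $\langle\alpha_i, A_i\rangle$ with $\alpha_i<\kappa$ and $A_i\subseteq \alpha_i$, so every stem lies in $V_\kappa$. Because $\kappa$ is strong it is in particular inaccessible, so $|V_\kappa| = \kappa$, and there are at most $\kappa$ possible stems. Given any family $\{p_\xi : \xi < \kappa^{+}\} \subseteq \MQB_{\mathcal{U}}$, the pigeonhole principle applied to $\xi \mapsto d_{p_\xi}$ yields distinct $\xi\neq\eta$ with $d_{p_\xi} = d_{p_\eta}$; by the previous paragraph $p_\xi$ and $p_\eta$ are compatible, so no antichain has size $\kappa^{+}$. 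I do not expect any serious obstacle; the only subtlety worth flagging is the appeal to the $\kappa$-completeness of $\mathcal{F}(\kappa)$, which rests on the hypothesis $o^{\mathcal{U}}(\kappa) = \delta < \kappa$ inherited from the setup of Theorem 1.1.
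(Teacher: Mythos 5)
Your proof is correct and takes exactly the route the paper intends: the paper gives no proof beyond the remark that any two conditions with the same stem are compatible, and your argument---taking $r = d^{\frown}\langle\kappa, A\cap B\rangle$ as the common extension and counting the at most $|V_\kappa| = \kappa$ possible stems---is precisely that sketch filled in. One small simplification: closure of $\mathcal{F}(\kappa)$ under finite intersections needs only that it is an intersection of filters (each $\mathcal{U}(\kappa,\beta)$ absorbs $A \cap B$ separately), so the appeal to $\kappa$-completeness and to $\delta < \kappa$ is harmless but unnecessary.
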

We also have the following factorization lemma.
\begin{lemma}[ Factorization lemma]
Suppose $p=\langle \langle\alpha_0,A_0\rangle,\dots,\langle\alpha_n,A_n\rangle\rangle \in \MQB_{\mathcal{U}},$ where $n>0$, and also suppose that $m<n.$ Set
\[
p^{\leq m} = \langle \langle\alpha_0,A_0\rangle,\dots,\langle\alpha_m,A_m\rangle\rangle
\]
and
\[
p^{>m}= \langle\langle\alpha_{m+1},A_{m+1}\rangle,\dots,\langle\alpha_n,A_n\rangle\rangle.
\]
Then there is a map
\[
\pi: \MQB_{\mathcal{U}}/ p \longrightarrow \MQB_{\mathcal{U} \upharpoonright \alpha_m+1} / p^{\leq m} \times \MQB_{\mathcal{U}} / p^{>m}
\]
which is a forcing isomorphism with respect to both $\leq$ and $\leq^*.$
\end{lemma}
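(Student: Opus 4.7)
The plan is to define $\pi$ by splitting each extension of $p$ at the coordinate whose ordinal equals $\alpha_m$, and then check that this splitting is a bijection compatible with both orders. Concretely, given $q = \langle \langle \beta_0, B_0 \rangle, \ldots, \langle \beta_k, B_k \rangle \rangle \leq p$, the hypothesis $q \leq p$ forces some unique index $j^*$ with $\beta_{j^*} = \alpha_m$, and I set
\[
\pi(q) = \bigl( \langle \langle \beta_0, B_0 \rangle, \ldots, \langle \beta_{j^*}, B_{j^*} \rangle \rangle,\ \langle \langle \beta_{j^*+1}, B_{j^*+1} \rangle, \ldots, \langle \beta_k, B_k \rangle \rangle \bigr).
\]
To see that the first component is a legitimate condition in $\MQB_{\mathcal{U} \upharpoonright \alpha_m + 1}$, I would use the coherence of $\mathcal{U}$: since $\mathcal{U} \upharpoonright \alpha_m + 1$ agrees with $\mathcal{U}$ on all pairs $(\alpha', \beta')$ with $\alpha' \leq \alpha_m$, the filter $\mathcal{F}(\gamma)$ computed from either sequence is the same for $\gamma \leq \alpha_m$, so each requirement $B_j \in \mathcal{F}(\beta_j)$ is preserved. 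The second component is valid in $\MQB_\mathcal{U}$ because its top point is still $\kappa$ and its minimum $\beta_{j^*+1} > \alpha_m > \delta$ meets the $\delta$-bound in Definition 2.3(a).

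Next, I would exhibit the inverse as concatenation: for any pair $(r, s) \leq (p^{\leq m}, p^{>m})$, the sequence $r \append s$ is a condition in $\MQB_\mathcal{U}/p$, because the disjointness clause (3) of Definition 2.3(a) at the seam is immediate from $\max(\mathrm{stem}(r)) = \alpha_m < \min(\mathrm{stem}(s))$, and every other requirement is inherited directly from $r$ or $s$. This map is plainly a two-sided inverse to $\pi$. Order preservation then reduces to the assertion that $q \leq q'$ in $\MQB_\mathcal{U}/p$ is equivalent to the componentwise inequality under $\pi$. The decisive observation is that clause (3) of the refinement relation, applied to an ordinal $\beta_j$ appearing in $q$ but not in $q'$, refers only to the measure set at the least $\alpha_k^{q'}$ strictly above $\beta_j$; since $\alpha_m$ is a stem point of both $q$ and $q'$, new ordinals below $\alpha_m$ are controlled entirely by the first factor and new ordinals above $\alpha_m$ entirely by the second. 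The same dichotomy, together with the additional constraint that the lengths match, yields the isomorphism for the direct extension relation $\leq^*$.

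The main technical obstacle is the bookkeeping in verifying clause (3) at the seam and in using coherence to match up the filters $\mathcal{F}(\gamma)$ for $\gamma \leq \alpha_m$ as computed in $\mathcal{U}$ and in $\mathcal{U} \upharpoonright \alpha_m + 1$; once those identifications are in place, the factorization is essentially the splitting of a finite sequence at a distinguished index, and no further genuinely new ideas are required.
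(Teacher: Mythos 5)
Your proposal is correct in substance, but there is nothing in the paper to compare it against line by line: the paper's entire ``proof'' of the Factorization Lemma is a citation to Gitik's Handbook chapter (Lemma 5.6 of \cite{G}), and what you have written is essentially the standard argument that this citation points to --- split each $q\leq p$ at the unique coordinate carrying $\alpha_m$ (which exists by clause (b)(2) of Definition 2.3), invert by concatenation, and verify $\leq$ and $\leq^*$ coordinatewise, using that clause (b)(3) only ever refers to the least stem point above a new ordinal, so that the stem point $\alpha_m$ insulates the two factors from each other. Two small points in your write-up deserve sharpening. First, the identification of the filters $\mathcal{F}(\gamma)$ for $\gamma\leq\alpha_m$ needs no appeal to coherence: by Definition 2.1, $\mathcal{U}\upharpoonright(\alpha_m+1)$ consists of literally the same measures $\mathcal{U}(\alpha',\beta')$ for $\alpha'\leq\alpha_m$, so the filters agree by definition of the restriction; coherence concerns the behaviour of $\mathcal{U}$ under the ultrapower embeddings and plays no role here. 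Second, at the seam your concatenation map needs one further observation: an extension $s\leq p^{>m}$ in $\MQB_{\mathcal{U}}$ draws its new points from subsets of $A_{m+1}$ (or of higher $A_i$'s), and $A_{m+1}\cap\alpha_m=\emptyset$ only guarantees these points are $\geq\alpha_m$, not $>\alpha_m$, since $\alpha_m$ itself may lie in $A_{m+1}$; one should first shrink $A_{m+1}$ to $A_{m+1}\setminus\{\alpha_m\}$ (a harmless $\leq^*$-dense refinement) so that $r\append s$ is always well-formed. Relatedly, the bottom constraint of Definition 2.3(a) in the lower factor $\MQB_{\mathcal{U}\upharpoonright\alpha_m+1}$ is computed from $o^{\mathcal{U}}(\alpha_m)$ playing the role of $\delta$, not from $\delta$ itself, a mismatch the paper's terse definitions leave implicit and which the cited source resolves by its own formulation of the conditions. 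Neither point is a genuine gap --- they are precisely the bookkeeping you yourself flagged --- and with them your splitting/concatenation bijection goes through and supplies the proof the paper outsources.
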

\begin{proof}
See   \cite{G}, Lemma 5.6.
\end{proof}
We also have the Prikry lemma:
\begin{lemma} [Prikry property]
The forcing $(\MQB_{\mathcal{U}}, \leq, \leq^*)$ satisfies the Prikry property, i.e given any $p \in \MQB_{\mathcal{U}}$ and any statement $\sigma$
in the forcing language of $(\MQB_{\mathcal{U}}, \leq)$, there exists $q \leq^* p$ deciding $\sigma$.
\end{lemma}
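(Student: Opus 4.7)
The plan is to follow the classical Prikry-property proof for Radin/Magidor forcing (see \cite{G}, Chapter 5), combining Lemma 2.4 with a diagonal shrinking argument that uses the normality and $\kappa$-completeness of the filters $\mathcal{F}(\alpha)$. I will proceed by induction on $\delta = o^{\mathcal{U}}(\kappa)$, and within that by a Mathias-style colouring of $A$.

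First I would reduce to the ``pure top'' case. Given $p = d_p {}^\frown \langle \kappa, A \rangle$, Lemma 2.4 identifies $\MQB_{\mathcal{U}}/p$ with a product whose lower factor is bounded below $\kappa$ and has size $<\kappa$; this lower factor contributes to a forcing statement $\sigma$ only through a small name, which can be absorbed into the setup, so the task is to find $A^* \subseteq A$ with $A^* \in \mathcal{F}(\kappa)$ such that $q = d_p {}^\frown \langle \kappa, A^* \rangle \leq^* p$ decides $\sigma$. For each $\alpha \in A$ I consider the one-step extension $p_\alpha$ obtained by inserting $\alpha$ into the stem; factoring once more at $\alpha$ and applying the inductive hypothesis (valid since the Mitchell rank of $\alpha$ is strictly below that of $\kappa$), I thin the lower block so that the resulting direct extension of $p_\alpha$ decides $\sigma$ whenever some extension of $p_\alpha$ does. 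I then assign each $\alpha \in A$ a colour $c(\alpha) \in \{+, -, 0\}$ according to whether this refined $p_\alpha$ forces $\sigma$, forces $\neg\sigma$, or is left undecided by every direct extension. For each $\beta < \delta$, the ultrafilter $\mathcal{U}(\kappa, \beta)$ concentrates on a single colour; intersecting across $\beta < \delta$---possible because $\mathcal{F}(\kappa) = \bigcap_{\beta < \delta} \mathcal{U}(\kappa, \beta)$ is $\kappa$-complete---yields a uniformly coloured $A^* \in \mathcal{F}(\kappa)$. If the uniform colour is $+$ or $-$, then $q$ decides $\sigma$: every extension of $q$ factors through some $p_\alpha$ with $\alpha \in A^*$, and these all agree.

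The main obstacle is ruling out the uniform colour $0$; here the coherence condition $j^{\kappa}_{\beta}(\mathcal{U}) \restricted \kappa+1 = \mathcal{U} \restricted (\kappa, \beta)$ becomes essential. If the colour were $0$ on a set in $\mathcal{F}(\kappa)$, take any $r \leq q$ that decides $\sigma$ and reflect through the ultrapower embedding $j^{\kappa}_{\beta}$---with $\beta$ chosen to match the Mitchell level at which the stem of $r$ sits above $d_p$---to produce a direct extension of some $p_\alpha$ (for $\alpha \in A^*$) that decides $\sigma$, contradicting $c(\alpha) = 0$. The same reflection step also handles the parallel diagonal shrinking of the measure-one sets $A_0, \ldots, A_{n-1}$ inside the original stem $d_p$, needed to lift the conclusion from pure-top conditions to the general case. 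Making the reflection argument precise---in particular, controlling the interplay between the $\delta$-many measures on $\kappa$ and the nested one-step extensions at varying Mitchell ranks---is the technical heart of the argument.
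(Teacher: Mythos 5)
Your outline tries to reconstruct the argument that the paper itself does not give: the paper's entire ``proof'' of this lemma is the citation to \cite{G}, Lemma 5.8, so the comparison is really between your sketch and the standard Magidor-forcing argument it points to. The scaffolding (factor via Lemma 2.4, pass to one-step extensions, colour the points of $A$, invoke coherence) is the right family of ideas, but the pivotal homogenization step is wrong as stated. For each $\beta<\delta$ the ultrafilter $\mathcal{U}(\kappa,\beta)$ gives you a set $H_\beta\in\mathcal{U}(\kappa,\beta)$ on which $c$ is constant, with value $c_\beta$ possibly depending on $\beta$. These $H_\beta$ are members of the individual ultrafilters, \emph{not} of the filter $\mathcal{F}(\kappa)=\bigcap_{\beta<\delta}\mathcal{U}(\kappa,\beta)$, so the $\kappa$-completeness of $\mathcal{F}(\kappa)$ buys you nothing: $\bigcap_{\beta<\delta}H_\beta$ need not belong to any $\mathcal{U}(\kappa,\beta)$, and if two measures concentrate on different colours it is outright empty (the colour of $\alpha$ is a single value, so differently coloured homogeneous sets are disjoint). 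A uniformly coloured set in $\mathcal{F}(\kappa)$ exists if and only if all the $c_\beta$ agree, and proving that the $\delta$-many measures \emph{cannot} disagree --- say $\mathcal{U}(\kappa,0)$ concentrating on $+$ while $\mathcal{U}(\kappa,1)$ concentrates on $-$ --- is exactly the hard content of the Prikry lemma for Magidor forcing, the point at which it is genuinely harder than for Prikry forcing with a single measure. The repair uses coherence quantitatively: for $\beta'<\beta$, $A\in\mathcal{U}(\kappa,\beta')$ implies $\{\alpha<\kappa : A\cap\alpha\in\mathcal{U}(\alpha,\beta')\}\in\mathcal{U}(\kappa,\beta)$, which lets one interleave one-step extensions of different Mitchell ranks (put a $+$-point inside the block attached to a $-$-point) and extract a condition deciding $\sigma$ both ways, a contradiction. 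Your sketch takes this agreement for free.

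Two further points need repair. First, when you factor at $\alpha$, the inductive hypothesis on Mitchell rank applies only to the lower factor $\MQB_{\mathcal{U}\upharpoonright\alpha+1}$; the upper factor is again $\MQB_{\mathcal{U}}$ with top block at $\kappa$ and the same $o^{\mathcal{U}}(\kappa)=\delta$, so ``apply the inductive hypothesis to $p_\alpha$'' is circular as written. The decision of $\sigma$ involves both factors, and the correct move is the one the paper itself uses in proving Lemma 2.7: enumerate the $<\kappa$-many lower-factor conditions and run a $\leq^*$-decreasing sequence in the upper factor (whose direct ordering is sufficiently closed since $\mathcal{F}(\kappa)$ is $\kappa$-complete) to absorb the lower factor, combined with an induction on the number of points added beyond the stem rather than a blanket appeal to rank --- this minimal-counterexample induction is also what actually disposes of the colour-$0$ case, not the unspecified ``reflection through $j^{\kappa}_{\beta}$ matched to the stem of $r$.'' Second, you concede that making the reflection step precise is ``the technical heart,'' which is accurate: as it stands the proposal names the heart but does not supply it, and together with the false intersection step this leaves a genuine gap rather than a complete alternative proof.
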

\begin{proof}
See \cite{G}, Lemma 5.8.
\end{proof}
Now suppose that $G_{\MQB_{\mathcal{U}}}$ is $\MQB_{\mathcal{U}}$-generic over $V$ and set
\[
C= \{ \beta: \exists p \in G_{\MQB_{\mathcal{U}}}, \text{ and }\exists i< n^p, \text{ such that }\beta=\alpha_i^p          \}.
\]
Then $C$ is a club in $\kappa$ of order type $\delta$, and thus
\[
cof^{V[G_{\MQB_{\mathcal{U}}}]}(\kappa) = cof^V(\delta).
\]
Indeed, $cof^{V[G_{\MQB_{\mathcal{U}}}]}(\kappa) =\delta$ if $\delta$ is regular in $V$. Let $\vec{\beta}=\langle  \beta_i: i < \delta     \rangle$
be an increasing enumeration of $C$. Note that we can recover $G_{\MQB_{\mathcal{U}}}$ from $\vec{\beta}$. To see this, let $G_{\vec{\beta}}$ consists of conditions
$p \in \MQB_{\mathcal{U}}$ such that:
\begin{enumerate}
\item $\{\alpha_0^p, \dots, \alpha_{n^p-1}^p    \} \subseteq \text{range}(\vec{\beta}).$
\item For each $i < \delta$, there exists $q \leq p$ which mentions $\beta_i,$ i.e. $\beta_i=\alpha^q_j$ for some $j < n^q.$
\end{enumerate}
$G_{\vec{\beta}}$ is easily seen to be a filter such that $G_{\MQB_{\mathcal{U}}} \subseteq G_{\vec{\beta}}.$ Hence by the maximality of $G_{\MQB_{\mathcal{U}}}$, we have $G_{\MQB_{\mathcal{U}}}=G_{\vec{\beta}}.$ This convinces us to refer to $\vec{\beta}$ as the Magidor generic sequence (with respect to $\mathcal{U}$).
 The next lemma follows  from Lemmas 2.5 and 2.6.
\begin{lemma}
Assume that $\eta < \kappa$. Let $i< \delta$ be such that $\beta_i \leq \eta < \beta_{i+1}.$ Then
\[
P(\eta) \cap V[\vec{\beta}] = P(\eta) \cap V[\vec{\beta} \upharpoonright i+1].
\]
\end{lemma}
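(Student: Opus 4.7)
The inclusion $P(\eta) \cap V[\vec{\beta}\restriction i+1] \subseteq P(\eta) \cap V[\vec{\beta}]$ is trivial, so the content of the lemma is the reverse inclusion. My strategy is to factor $\MQB_{\mathcal{U}}$ at the coordinate $\beta_i$ using Lemma 2.5 and then run a Prikry-mixing argument on the upper factor.

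By density I first fix $p \in G_{\MQB_{\mathcal{U}}}$ with $\alpha_m^p = \beta_i$ for some $m$ and such that each top measure-one set $A_j^p$ ($j > m$) satisfies $A_j^p \subseteq (\eta, \alpha_j^p)$; the latter is achieved by replacing $A_j^p$ with $A_j^p \cap (\eta, \alpha_j^p)$, which remains in the $\alpha_j^p$-complete filter $\mathcal{F}(\alpha_j^p)$ since $\alpha_j^p \geq \alpha_{m+1}^p \geq \beta_{i+1} > \eta$. Setting $P_1 = \MQB_{\mathcal{U}\restriction \beta_i+1}/p^{\leq m}$ and $P_2 = \MQB_{\mathcal{U}}/p^{>m}$, Lemma 2.5 factors $G_{\MQB_{\mathcal{U}}} = G_1 \times G_2$, and the standard reconstruction of the generic filter from the Magidor generic sequence identifies $V[G_1]$ with $V[\vec{\beta}\restriction i+1]$. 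The shrinking guarantees that every $r \leq p^{>m}$ in $P_2$ has all coordinates strictly above $\eta$, so all the relevant filters are $\eta^+$-complete; consequently $(P_2, \leq^*)$ restricted below $p^{>m}$ is $\eta^+$-closed, meaning any $\leq^*$-decreasing sequence of length at most $\eta$ admits a lower bound obtained by coordinate-wise intersection of measure-one sets.

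Now let $\dot X$ be a $\MQB_{\mathcal{U}}$-name for the given subset of $\eta$. I claim the set
\[
E = \bigl\{ r \leq p^{>m} \text{ in } P_2 : \forall \gamma < \eta \; \exists\, A_{\gamma,r} \subseteq P_1 \text{ maximal antichain below } p^{\leq m} \text{ with each } (p_1, r) \text{ deciding } \check\gamma \in \dot X \bigr\}
\]
is $\leq$-dense below $p^{>m}$. Given $r \leq p^{>m}$, I would build a $\leq^*$-decreasing chain $r = r_0 \geq^* r_1 \geq^* \cdots$ in $V$: at stage $\gamma+1$, Lemma 2.6 applied in the product $P_1 \times P_2$ (via the isomorphism of Lemma 2.5, which preserves $\leq^*$) shows that $\{p_1 \leq p^{\leq m} : \exists r' \leq^* r_\gamma, \; (p_1, r') \text{ decides } \check\gamma \in \dot X\}$ is dense below $p^{\leq m}$ in $P_1$; the $\beta_i^+$-c.c.\ of $P_1$ (the analog of Lemma 2.4 applied to $\MQB_{\mathcal{U}\restriction \beta_i+1}$) provides a maximal antichain $A_\gamma$ inside this dense set of size at most $\beta_i \leq \eta$, and the $\eta^+$-closure of $\leq^*$ lets me intersect the associated witnesses into a single $r_{\gamma+1} \leq^* r_\gamma$ working uniformly for every $p_1 \in A_\gamma$. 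Limit stages again invoke $\eta^+$-closure.

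By $\leq$-density and genericity of $G_2$ some $r^* \in E \cap G_2$ is available, with its antichains $\langle A_\gamma : \gamma < \eta\rangle$. For each $\gamma < \eta$ let $p_1^\gamma$ denote the unique element of $A_\gamma \cap G_1$ (which exists by genericity of $G_1$). Then
\[
\dot X^{G_{\MQB_{\mathcal{U}}}} \;=\; \{\gamma < \eta : (p_1^\gamma, r^*) \Vdash \check\gamma \in \dot X\} \;\in\; V[G_1] \;=\; V[\vec{\beta}\restriction i+1],
\]
finishing the proof. The main technical obstacle is securing the $\eta^+$-closure of $\leq^*$ on $P_2$ after the factorization; this is precisely what the preliminary shrinking above $\eta$ accomplishes, and it rests essentially on the fact that $\alpha_{m+1}^p \geq \beta_{i+1} > \eta$ keeps the filters at the lowest relevant coordinates $\eta^+$-complete. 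Without that preparation, new coordinates inserted by strengthenings of $p^{>m}$ between $\beta_i$ and $\beta_{i+1}$ could destroy the completeness required for the mixing step.
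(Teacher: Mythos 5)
Your proof is correct, and its skeleton is the same as the paper's: factor $\MQB_{\mathcal{U}}$ below a generic condition mentioning $\beta_i$ via Lemma 2.5, and then use the Prikry property together with closure of $\leq^*$ on the upper factor to decide, in the ground model, all lower-factor statements about the name, so that the set is computable from $G_1$ alone. The difference is in how the mixing is organized, and it is worth recording. The paper factors \emph{past} $\beta_{i+1}$ (its upper part is $\MQB_{\mathcal{U}}/p^{>m+1}$, where $p$ mentions both $\beta_i$ and $\beta_{i+1}$), so $\leq^*$ is $\beta_{i+1}$-closed with no preparation, and it then runs a single $\leq^*$-decreasing sequence along an enumeration of (lower factor) $\times\, \eta$ in order type $\theta<\beta_{i+1}$; this implicitly uses that the lower factor has size $<\beta_{i+1}$, i.e.\ the inaccessibility of the points on the Magidor club. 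You instead factor at $\beta_i$, restore $\eta^+$-closure of $\leq^*$ by shrinking the measure-one sets into $(\eta,\alpha^p_j)$, and replace the global enumeration by per-$\gamma$ maximal antichains of the lower factor, of size $\leq\beta_i\leq\eta$ by the $\beta_i^+$-c.c.\ analogue of Lemma 2.4. What your version buys: it needs no cardinality estimate on the lower factor at all, and it squarely handles the coordinates that extensions can insert in the interval $(\beta_i,\beta_{i+1})$ --- exactly the low-completeness coordinates that the paper sidesteps by cutting at $m+1$ (note the paper's displayed factorization there silently drops the block $\langle\alpha_{m+1},A_{m+1}\rangle$, a harmless sloppiness your shrinking makes unnecessary). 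What it costs: the preliminary density/shrinking step --- your appeal to ``density'' should really be phrased as: pick $p\in G=G_{\vec{\beta}}$ mentioning $\beta_i$, and observe that the shrunken $\leq^*$-extension is still in $G_{\vec{\beta}}$ because every generic point beyond $\beta_i$ is $\geq\beta_{i+1}>\eta$ --- plus the antichain bookkeeping, including the (correct) observation that $\leq\eta$ many direct extensions of a fixed $r_\gamma$ have a common coordinate-wise intersection since direct extensions do not change stems. Both arguments are sound; yours is the marginally more robust one.
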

\begin{proof}
Let $p=\langle \langle\alpha_0,A_0\rangle,\dots,\langle\alpha_n,A_n\rangle\rangle \in G_{\vec{\beta}}$ be such that $p$ mentions both $\beta_i$  and $\beta_{i+1}$, say
$\beta_i=\alpha_m$ and $\beta_{i+1}=\alpha_{m+1}$.
 By the Factorization lemma we have
 \[
 \MQB_{\mathcal{U}} / p \simeq \MQB_{\mathcal{U}\upharpoonright \alpha_m+1} /p^{\leq m} \times \MQB_{\mathcal{U}} / p^{>m+1}.
 \]
Let $\dot{A}$ be a $\MQB_{\mathcal{U}}$-name for $A \subseteq \eta$
such that $\Vdash_{\MQB_{\mathcal{U}}}\dot{A} \subseteq \eta$. Let $\dot{B}$ be a $\MQB_{\mathcal{U}} / p^{> m+1}$-name for a subset
of $\MQB_{\mathcal{U}\upharpoonright \alpha_m+1}/p^{\leq m} \times \eta$ such that
\[
\Vdash_{\MQB_{\mathcal{U}} / p^{>m+1}} \forall \gamma < \eta, ((r,f, \gamma) \in
\dot{B}
\iff (r, f) \Vdash_{\MQB_{\mathcal{U}\upharpoonright \alpha_m+1}/p^{\leq m}} \gamma \in \dot{A} ).
\]
Let $\langle y_\alpha: \alpha < \theta < \beta_{i+1}      \rangle$ be an enumeration of $\MQB_{\mathcal{U}\upharpoonright \alpha_m+1}/p^{\leq m} \times \eta$.
Define a $\leq^*$-decreasing sequence $\langle  q_\alpha \mid \alpha < \theta        \rangle$
of conditions in $\MQB_{\mathcal{U}} / p^{>m+1}$ such that for all $\alpha, q_\alpha$ decides ``$y_\alpha \in \dot{B}$''. This is possible as $(\MQB_{\mathcal{U}} / p^{> m+1}, \leq^*)$
is $\beta_{i+1}$-closed, and satisfies the Prikry property. Let $q \leq^* q_\alpha$ for all $\alpha < \theta$.
Then $q$ decides each ``$y_\alpha \in \dot{B}$''. It follows that
 $A\in V[\vec{\beta} \upharpoonright i+1]$ as required.
\end{proof}

It easily follows that the forcing $ \MQB_{\mathcal{U}}$
preserves cardinals.
We need the following theorem of Mitchell \cite{Mi} (see also \cite{Fuchs}, where a characterization is given for the original Magidor forcing).
\begin{theorem}[Characterization theorem]
Assume $V$ is an inner model of $W$. Suppose that $\vec{\beta}=\langle  \beta_i: i < \delta     \rangle \in W$ is an increasing continuous sequence of ordinals. Then $\vec{\beta}$ is $\MQB_{\mathcal{U}}$-generic
over $V$ if and only if the following hold.
\begin{itemize}
\item For every $j<\delta$, $\vec{\beta} \upharpoonright j$ is $\MQB_{\mathcal{U} \upharpoonright j+1}$-generic over $V$.
\item For every $X \in P(\kappa) \cap V$, $X $ is in $\mathcal{F}(\kappa)$ if and only if there exists $j<\delta$ such that $\{ \beta_i: j < i < \delta    \} \subseteq X$.
\end{itemize}
\end{theorem}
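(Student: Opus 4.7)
I would prove the theorem by establishing both directions: the forward direction (generic sequence implies the two conditions), which is a direct consequence of the Factorization lemma combined with density arguments, and the backward direction (the two conditions imply genericity), which is the main content and requires a Mathias-style argument using the Prikry property.

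\emph{Forward direction.} Assume $\vec{\beta}$ is the Magidor generic sequence extracted from a $\MQB_{\mathcal{U}}$-generic $G$ over $V$. For the first bullet, given $j < \delta$, pick a condition $p \in G$ with $\beta_j = \alpha_m^p$ in its stem; by Lemma 2.5 (Factorization), $\MQB_{\mathcal{U}}/p \cong \MQB_{\mathcal{U} \upharpoonright \alpha_m^p + 1}/p^{\leq m} \times \MQB_{\mathcal{U}}/p^{>m}$, and the projection of $G$ onto the first factor is generic for the restricted forcing, with induced generic sequence equal to the initial segment of $\vec{\beta}$ below $\alpha_m^p$. For the second bullet, if $X \in \mathcal{F}(\kappa) \cap V$, the set of conditions whose top measure-one set is contained in $X$ is dense (just shrink the top), so some $p \in G$ has $A_{n^p}^p \subseteq X$, whence all subsequent $\beta_i$ lie in $X$. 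The converse direction, that no tail of $\vec{\beta}$ can lie in a set outside $\mathcal{F}(\kappa)$, follows by a symmetric density argument using the coherence conditions and the Mitchell-order behavior at each level below $\delta$.

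\emph{Backward direction.} Suppose $\vec{\beta} \in W$ satisfies both bullets, and let $G_{\vec{\beta}}$ denote the filter defined in the paragraph preceding the theorem. I need to show $G_{\vec{\beta}}$ meets every dense open $D \in V$. Given $D$ and $p \in G_{\vec{\beta}}$, I construct $q \in D \cap G_{\vec{\beta}}$ with $q \leq p$ as follows. Using the Prikry property (Lemma 2.6) applied uniformly to each possible one-step extension $\xi$ of $p$ in $A_{n^p}^p$, together with a diagonal intersection taken against each of the measures $\mathcal{U}(\kappa,\gamma)$ whose intersection defines $\mathcal{F}(\kappa)$, I produce a direct extension $p^* \leq^* p$ and a set $A^* \subseteq A_{n^p}^p$ with $A^* \in \mathcal{F}(\kappa)$, such that for every $\xi \in A^*$, all direct extensions of the corresponding one-step extension of $p^*$ at $\xi$ uniformly decide the membership in $D$. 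By condition 2, there is some $\beta_i \in A^*$, and then by condition 1, the initial segment of $\vec{\beta}$ up to and including $\beta_i$ is generic for the restricted forcing $\MQB_{\mathcal{U} \upharpoonright \beta_i + 1}$. Using the Factorization lemma to pass to the shorter forcing and applying the inductive hypothesis on the length of the condition, I recover the desired $q$.

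\emph{Main obstacle.} The core technical difficulty lies in the uniform Prikry step of the backward direction: all one-step extensions of $p$ must be made to decide $D$ simultaneously, and the resulting shrunk measure-one set must lie in $\mathcal{F}(\kappa)$ rather than merely in some individual $\mathcal{U}(\kappa,\gamma)$. This forces a careful iterated use of the Prikry property, indexed by the Mitchell-order height, followed by a normality-based diagonal intersection that respects the coherence of the measure sequence. Executing this while maintaining compatibility with the Factorization lemma so that a genuine recursion on shorter forcings is available is the delicate point of the proof.
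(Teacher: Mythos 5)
The paper does not actually prove this theorem: it is quoted from Mitchell \cite{Mi} (with \cite{Fuchs} cited for the characterization of Magidor genericity), so your attempt must be measured against that standard argument rather than against anything in the text. Your forward direction is essentially right and is the routine half: factorization gives the first bullet, shrinking the top measure-one set gives one half of the second, and the remaining half is in fact simpler than your appeal to ``coherence conditions and Mitchell-order behavior'' suggests --- if $X\notin\mathcal{F}(\kappa)$ then $\kappa\setminus X\in\mathcal{U}(\kappa,\gamma)$ for some $\gamma<\delta$, so $\kappa\setminus X$ meets every top measure-one set unboundedly, and it is dense to insert a point of $\kappa\setminus X$ above any prescribed bound; hence no tail of $\vec{\beta}$ can lie in $X$.

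The backward direction, however, has a genuine gap at precisely the step you flag, and it is not merely a matter of execution. First, ``membership in $D$'' for a dense open $D\in V$ is not a statement of the forcing language, so Lemma 2.6 does not literally apply to it; what is needed is a strong Prikry-type (capturing) lemma: for dense open $D$ there is $p^*\leq^* p$ and a finite pattern of addresses such that \emph{every} extension of $p^*$ adding points at those addresses lies in $D$. Second, a single uniformization over one-step extensions followed by one diagonal intersection cannot suffice: entering $D$ may require adding many points, interleaved among the several blocks of the condition, and after adding a point $\xi$ the part of the forcing below $\xi$ is itself a Magidor forcing with top Mitchell order $o^{\mathcal{U}}(\xi)$, which must be handled recursively. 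This exposes that your induction variable is wrong: ``the length of the condition'' grows unboundedly under extension and is not a usable well-founded quantity; the correct induction, as in Mitchell's and Fuchs's proofs, is on the length of the coherent sequence (equivalently on the Mitchell order of the top coordinate), with the first bullet feeding the inductive hypothesis to the restricted forcings $\MQB_{\mathcal{U}\upharpoonright \beta_i+1}$ and the second bullet guaranteeing that a tail of $\vec{\beta}$ lies in the diagonalized set $A^*\in\mathcal{F}(\kappa)$ (note that $\mathcal{F}(\kappa)$ is closed under diagonal intersections since $\delta<\kappa$ and each $\mathcal{U}(\kappa,\gamma)$ is normal, so that part of your plan is sound). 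With a correctly formulated strong Prikry lemma, proved by this induction, your outline does assemble into the standard proof; as written, the key capturing step is asserted rather than established.
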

\section{Preparation}
Fix a strong cardinal  $\kappa$ and a measurable cardinal $\lambda>\kappa$. We need the following theorem of Woodin.
\begin{theorem}[Woodin, see \cite{GSH}]
Assume $\kappa$ is a strong cardinal. Then there is a forcing notion  of size $\kappa$ such that in the generic extension by it, $\kappa$ remains strong, and its strongness  is indestructible under adding any new Cohen subsets of $\kappa$.
\end{theorem}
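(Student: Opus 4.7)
The plan is to carry out a Laver-style preparation of size $\kappa$, tailored so that after forcing, the strongness of $\kappa$ is preserved by every further Cohen forcing $\mathrm{Add}(\kappa,\theta)$. I would begin by invoking the strong-cardinal analogue of Laver's diamond: there is a function $\ell:\kappa\to V_\kappa$ with the property that for every ordinal $\lambda$ and every $x\in H_{\lambda^+}$, there is an elementary embedding $j:V\to M$ with $\crit(j)=\kappa$, $V_\lambda\subseteq M$, $j(\kappa)>\lambda$, and $j(\ell)(\kappa)=x$. The existence of such an $\ell$ follows from strongness by a standard diagonalization using extenders.

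Using $\ell$ as a guide, define an Easton-support iteration $\MPB=\langle \MPB_\alpha,\dot{\MQB}_\alpha:\alpha<\kappa\rangle$ where, at each inaccessible stage $\alpha$, we let $\dot{\MQB}_\alpha=\ell(\alpha)$ whenever $\ell(\alpha)$ is a $\MPB_\alpha$-name for a forcing that is $\alpha$-directed-closed and of size less than the next inaccessible, and we force trivially otherwise. Routine Easton bookkeeping gives $|\MPB|=\kappa$ and the $\kappa$-cc. The substance of the argument is to verify that in $V^{\MPB\ast\dot{\mathrm{Add}}(\kappa,\theta)}$ the cardinal $\kappa$ remains $\lambda$-strong for every $\lambda$. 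Fix such a $\lambda$; using the Laver property of $\ell$, choose $j:V\to M$ witnessing sufficiently large strongness of $\kappa$ with $j(\ell)(\kappa)$ coding (a name for) $\mathrm{Add}(\kappa,\theta)$. Then $j(\MPB)$ factors in $M$ as
\[
\MPB \;\ast\; \dot{\mathrm{Add}}(\kappa,\theta) \;\ast\; \dot{\MPB}_{\mathrm{tail}},
\]
with $\MPB_{\mathrm{tail}}$ an iteration in $M[G\ast H]$ from stage $\kappa+1$ to $j(\kappa)$ that is highly closed in $M[G\ast H]$.

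It remains to lift $j$. The first factor lifts trivially since $\crit(j)=\kappa$. For the second factor, one constructs a master condition: $j''H$ is a directed subset of size $\theta<j(\kappa)$ of the $j(\kappa)$-directed-closed forcing $\mathrm{Add}(j(\kappa),j(\theta))^{M[G]}$, so it has a lower bound, and one builds an $M[G]$-generic $H^\ast$ below this bound containing $j''H$ by meeting the $\leq\lambda$-many relevant dense sets using the closure of the forcing in $M[G]$. For the tail factor one builds an $M[G\ast H\ast H^\ast]$-generic $K$ for $\MPB_{\mathrm{tail}}$ inside $V[G\ast H]$ by a diagonal intersection argument, exploiting the closure of $\MPB_{\mathrm{tail}}$ and the cardinality bound on dense subsets available in the target model. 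The lift $j^\ast:V[G\ast H]\to M^\ast$ then has $V_\lambda^{V[G\ast H]}\subseteq M^\ast$, and its derived $(\kappa,\lambda)$-extender witnesses $\lambda$-strongness in $V[G\ast H]$.

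The main obstacle is the lifting through $\mathrm{Add}(\kappa,\theta)$ and the subsequent tail, since Cohen forcing is only $\kappa$-closed (not $\kappa^+$-closed) and $\theta$ may be much larger than $\kappa$, so one must be careful that $j''H$ really does lie in $M[G]$ as a directed set there and that the bookkeeping for the tail generic genuinely runs inside $V[G\ast H]$. Handling this requires arranging enough closure of $M$ under $\lambda$-sequences from the outset when choosing $j$ (easy for strongness, by enlarging the target extender), together with the fact that $\MPB_{\mathrm{tail}}$ is built from pieces each of which is forced to be sufficiently closed by design of the preparation. Once the lift is complete, the conclusion is immediate and the theorem follows.
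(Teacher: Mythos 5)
The paper does not prove this theorem at all --- it is quoted as a black box from Gitik--Shelah \cite{GSH} --- so your proposal must stand on its own, and as written it does not. The fatal step is your parenthetical claim that one can arrange ``enough closure of $M$ under $\lambda$-sequences from the outset when choosing $j$ (easy for strongness, by enlarging the target extender).'' This is not easy; it is false in general. An extender ultrapower witnessing $\lambda$-strongness can be arranged to satisfy ${}^{\kappa}M\subseteq M$, but never more than that for free: if ${}^{\lambda}M\subseteq M$ then in particular $j''\lambda\in M$, which is a supercompactness-type hypothesis, strictly beyond strongness. Without this closure your master-condition step collapses: $j''H$ is a directed family of size $\theta$, but its union (or any lower bound) is essentially the object $j\restriction H$, which need not belong to $M[G]$ at all, so there is no condition in $\mathrm{Add}(j(\kappa),j(\theta))^{M[G]}$ below $j''H$ to build $H^{*}$ under. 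The problem is structural, not technical: indestructibility must cover \emph{every} $\theta$, including $\theta$ vastly larger than the strongness degree $\lambda$ being verified, so no choice of extender can restore the closure you need. This is exactly the obstruction that separates the strong-cardinal case from Laver's supercompact argument, which you have transplanted essentially verbatim.

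The known proofs get around this by abandoning master conditions for the Cohen factor. In Woodin's argument (and in Gitik--Shelah), one first transfers or builds an $M$-side generic for $j(\mathrm{Add}(\kappa,\theta))$ \emph{without} requiring it to contain $j''H$, and then performs ``surgery'': one alters the generic on the coordinates in $j''\theta$ so as to agree with the pointwise image of $H$, and verifies that genericity survives because each condition in the altered filter differs from a condition in the original filter on a set of size at most $\kappa$, which lies in $M$ by ${}^{\kappa}M\subseteq M$ --- the only closure strongness actually supplies. (Alternatively, Hamkins' lottery preparation obtains the same indestructibility for strong cardinals, again replacing the master-condition step by a transfer argument.) Your preparation iteration itself is salvageable --- a Laver-style function for strongness does exist, essentially by Gitik--Shelah, since for $x\in H_{\lambda^{+}}$ one can pass to a $(\lambda+2)$-strong embedding to ensure $x\in M$ --- but the lifting through $\mathrm{Add}(\kappa,\theta)$ must be rebuilt around surgery rather than a lower bound for $j''H$. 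Your tail-generic diagonalization has a related soft spot (the number of dense sets in $M[G\ast H\ast H^{*}]$ must be matched against the closure actually available in $V[G\ast H]$, which requires the extender representation $j(f)(a)$ to enumerate them in order type $\lambda^{+}$, not just ``a cardinality bound''), but that part is repairable by standard counting; the master condition is not.
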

By  the above theorem, we may assume in $V$ that the strongness of $\kappa$ is indestructible under $Add(\kappa,\lambda)$. Note that our assumption does not affect measurability of $\lambda$ because it will remain measurable after Woodin's forcing, as the forcing is of size $\kappa < \lambda$. Therefore,  let $D$ be a normal measure on $\lambda$, and let $j:V\longrightarrow M\simeq Ult(V,D)$ be the canonical elementary embedding.
Let
\[
\MPB= Add(\kappa,\lambda).
\]
Suppose that $G_{\MPB}$ is $\MPB$-generic over $V$. Thus by our assumption, $\kappa$ is strong in $V[G_{\MPB}]$, so fix in $V[G_{\MPB}],$ a coherent sequence of measures $\mathcal{U}=\{\mathcal{U}(\alpha,\beta):\alpha\leq\kappa, \beta<o^{\mathcal{U}}(\alpha)\}$ with $o^{\mathcal{U}}(\kappa)=\delta$. Let also $\dot{\mathcal{U}}=\langle\dot{\mathcal{U}}(\alpha,\beta):\alpha\leq\kappa,\beta<o^{\check{\mathcal{U}}}(\alpha)\rangle$ be a $\mathbb{P}$-name for $\mathcal{U}$.
\begin{lemma}
There exists a set $\Lambda\in D$ consisting of Mahlo cardinals such that if  $\xi\in \Lambda$, then
\begin{center}
$\mathcal{U}_{\xi}:=\langle\dot{\mathcal{U}}(\alpha,\beta)^{G}\cap V[G_{\MPB}\upharpoonright\xi]:~\alpha\leq\kappa, \text{ and } \beta<o^{\mathcal{U}}(\alpha)\rangle$
 \end{center}
 is a coherent  sequence of  measures in $V[G_{\MPB}\upharpoonright\xi]= V[G_{\MPB} \cap Add(\kappa, \xi)]$.
\end{lemma}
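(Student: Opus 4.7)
The plan is to use the normal measure $D$ on $\lambda$ and its ultrapower $j \colon V \to M$, with critical point $\lambda$, to reflect the coherence of $\mathcal{U}$ from $V[G_{\MPB}]$ down to its restrictions. Working in $V$, I would define the set
$$
\Lambda = \{\xi < \lambda : \xi \text{ is Mahlo and } 1_{\MPB} \Vdash \dot{\mathcal{U}}_\xi \text{ is a coherent sequence in } V[\dot{G}_{\MPB} \upharpoonright \xi]\},
$$
with the intention of proving $\lambda \in j(\Lambda)$, which yields $\Lambda \in D$. The Mahlo clause is automatic since every measurable cardinal is Mahlo, so the set of Mahlo cardinals below $\lambda$ already lies in $D$.

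Since $\kappa < \lambda = \crit(j)$, one has $j(\kappa) = \kappa$ and $j \upharpoonright \MPB = \id$, so $\MPB$ is an initial segment of $j(\MPB) = Add(\kappa, j(\lambda))^M$ and $j(\dot{\mathcal{U}})$ agrees with $\dot{\mathcal{U}}$ on conditions in $\MPB$. Let $K$ be $j(\MPB)$-generic over $M$ extending $G_{\MPB}$ (passing to an outer model if necessary); then in $M[K]$ the evaluation $j(\dot{\mathcal{U}})^K$ is a coherent sequence whose restriction to the intermediate stage $M[G_{\MPB}]$ returns the original $\mathcal{U}$. By the $\lambda$-closure of $M$, the object $\mathcal{U}$ and all ultrapower and Mitchell-order witnesses to its coherence live in $M[G_{\MPB}]$. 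Hence the statement ``$j(\dot{\mathcal{U}})_\lambda$ is a coherent sequence in $M[\dot{K} \upharpoonright \lambda]$'' is forced in $M$, i.e.\ $\lambda \in j(\Lambda)$, and therefore $\Lambda \in D$.

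The main obstacle is verifying the inner forcing statement itself: for a given $\xi$, that $\mathcal{U}_\xi$ is coherent in $V[G_{\MPB} \upharpoonright \xi]$. I would split the verification by the value of $\alpha$. For $\alpha < \kappa$, the $\kappa$-closure of $\MPB$ gives $P(\alpha)^V = P(\alpha)^{V[G_{\MPB}]}$, so $\mathcal{U}(\alpha, \beta) = \mathcal{U}_\xi(\alpha, \beta)$ as sets of subsets of $\alpha$, and the ultrapower embedding together with its coherence identity descends from $V[G_{\MPB}]$ to $V[G_{\MPB} \upharpoonright \xi]$ essentially unchanged. The substantive case is $\alpha = \kappa$, where the ultrapower $\tilde{\jmath}^\kappa_\beta$ in $V[G_{\MPB} \upharpoonright \xi]$ by $\mathcal{U}_\xi(\kappa, \beta)$ is a proper submodel of $j^\kappa_\beta$ in $V[G_{\MPB}]$, and one must confirm the identity $\tilde{\jmath}^\kappa_\beta(\mathcal{U}_\xi) \upharpoonright \kappa + 1 = \mathcal{U}_\xi \upharpoonright (\kappa, \beta)$. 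My approach would exploit the elementary factor embedding between $\tilde{\jmath}^\kappa_\beta$ and $j^\kappa_\beta$ (which fixes the restricted measure $\mathcal{U}_\xi$) to transfer the original coherence identity, combined with the reflection through $j$: the equation holds trivially at the lifted stage $\xi = \lambda$ and descends to a $D$-large set of $\xi$, giving exactly the conclusion of the lemma.
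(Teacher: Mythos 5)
Your proposal is correct and takes essentially the same route as the paper: working in $V$, define $\Lambda$ by the forcing statement ``$\dot{\mathcal{U}}_{\xi}$ is a coherent sequence of measures'', and prove $\Lambda \in D$ by showing $\lambda \in j(\Lambda)$, using $j(\MPB)\upharpoonright\lambda = \MPB$ and the fact that $j(\dot{\mathcal{U}})_{\lambda}$ is forced to equal $\dot{\mathcal{U}}$, whose coherence in $V[G_{\MPB}]$ is already known. Your final paragraph is superfluous and slightly misleading --- no pointwise verification that $\mathcal{U}_{\xi}$ is coherent (or even that $\mathcal{U}_{\xi} \in V[G_{\MPB}\upharpoonright\xi]$) for a given $\xi$ is needed, nor is it available for arbitrary $\xi$; as you yourself concede at its end, the identity is checked only at the lifted stage $\xi=\lambda$ and the $D$-large set comes entirely from that single reflection, which is exactly the paper's argument.
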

\begin{proof}
Working in $V$, let $\Lambda$ be the set of all cardinals $\xi < \lambda$
such that
$$
~~\Vdash_{\MPB \upharpoonright \xi} ``\dot{\mathcal{U}}_{\xi}~is~a~coherent~sequence~of~measures\text{''}~,
$$
where $\MPB \upharpoonright \xi = Add(\kappa, \xi).$

 We show that $\Lambda \in D,$
or equivalently, $\lambda \in j(\Lambda).$ We have
$$
j(\Lambda)= \{ \xi < j(\lambda):~~\Vdash^M_{j(\MPB) \upharpoonright \xi} ``j(\dot{\mathcal{U}})_{\xi}~is~a~coherent~sequence~of~measures\text{''}\}.~
$$
Since $\kappa < \lambda$, we have $j(\mathbb{P})\upharpoonright\lambda=\mathbb{P}$; on the other hand
$
\Vdash^M_{\MPB} ``j(\dot{\mathcal{U}})_{\lambda}=\dot{\mathcal{U}}\text{''}.
$
Now the result follows immediately owing to $\mathcal{U}$ was taken to be a coherent sequence of measures in $V[G]$.
\end{proof}
Working in $V[G_{\MPB}]$, let $\mathbb{Q}=\mathbb{Q}_{\mathcal{U}}$ be the Magidor forcing defined using $\mathcal{U}$; and for $\xi \in \Lambda,$  let $\mathbb{Q}_{\xi}=\mathbb{Q}_{\mathcal{U}_\xi}$ be the Magidor forcing defined in $V[G_{\MPB }\upharpoonright\xi]$ relative to $\mathcal{U}_\xi$.
The following lemma follows from Theorem 2.8 (Characterization lemma).
\begin{lemma}
Suppose that $\mathrm{X}$ is a cofinal $\delta$-sequence in $\kappa$ which is $\mathbb{Q}$-generic over $V[G_{\MPB}]$. Then $\mathrm{X}$ is a $\mathbb{Q}_\xi$- generic filter over $V[G_{\MPB} \upharpoonright\xi]$, for all $\xi\in \Lambda$.
\end{lemma}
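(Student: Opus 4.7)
The plan is to apply Theorem 2.8 (the Characterization theorem) in reverse: we already know that $\mathrm{X}=\langle\beta_i:i<\delta\rangle$ is $\mathbb{Q}$-generic over $V[G_{\MPB}]$, and we want the parallel statement one universe down. Taking the inner model in Theorem 2.8 to be $V[G_{\MPB}\upharpoonright\xi]$ and the outer model to be (anything containing) $V[G_{\MPB}][\mathrm{X}]$, it suffices to verify, with respect to the coherent sequence $\mathcal{U}_\xi$, the two bullets of Theorem 2.8: namely, (i) $\mathrm{X}\upharpoonright j$ is $\mathbb{Q}_{\mathcal{U}_\xi\upharpoonright j+1}$-generic over $V[G_{\MPB}\upharpoonright\xi]$ for every $j<\delta$, and (ii) for every $X\in P(\kappa)\cap V[G_{\MPB}\upharpoonright\xi]$, $X\in\mathcal{F}_\xi(\kappa)$ iff some tail of $\mathrm{X}$ is contained in $X$.

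Condition (ii) is where the definition of $\mathcal{U}_\xi$ in Lemma 3.2 pays off. By construction $\mathcal{U}_\xi(\kappa,\beta)=\mathcal{U}(\kappa,\beta)\cap V[G_{\MPB}\upharpoonright\xi]$ for every $\beta<\delta$, so $\mathcal{F}_\xi(\kappa)=\mathcal{F}(\kappa)\cap V[G_{\MPB}\upharpoonright\xi]$. Consequently, for any $X\in P(\kappa)\cap V[G_{\MPB}\upharpoonright\xi]$, membership in $\mathcal{F}_\xi(\kappa)$ coincides with membership in $\mathcal{F}(\kappa)$, and the equivalence with tail containment is then exactly the second bullet of Theorem 2.8 applied to the hypothesis that $\mathrm{X}$ is $\mathbb{Q}$-generic over $V[G_{\MPB}]$.

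Condition (i) reduces to the same type of argument carried out at lower levels. For each $j<\delta$ the coherent sequence $\mathcal{U}_\xi\upharpoonright j+1$ is obtained from $\mathcal{U}\upharpoonright j+1$ by intersecting every component measure with $V[G_{\MPB}\upharpoonright\xi]$, and $\mathrm{X}\upharpoonright j$ is $\mathbb{Q}_{\mathcal{U}\upharpoonright j+1}$-generic over $V[G_{\MPB}]$ by the forward direction of Theorem 2.8 applied to $\mathrm{X}$. So the very same verification of bullets (i) and (ii) can be carried out one level down, and the argument can be packaged as a formal induction on the rank of the top of the coherent sequence.

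The only step I would want to double-check carefully is that Lemma 3.2 really delivers the coherent-sequence restriction property at every level below $\kappa$, and not just at $\kappa$ itself, because the recursive bullet (i) refers to measures on ordinals strictly below $\kappa$. This is however already built into the statement of Lemma 3.2, which defines $\mathcal{U}_\xi$ by intersecting each component $\mathcal{U}(\alpha,\beta)$ with $V[G_{\MPB}\upharpoonright\xi]$. Once that observation is in place, the induction is routine and the lemma follows.
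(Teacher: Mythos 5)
Your proposal is correct and follows exactly the route the paper takes: the paper offers no written proof beyond the remark that the lemma ``follows from Theorem 2.8 (Characterization lemma)'', and your argument is precisely that derivation spelled out --- verifying the two bullets of Theorem 2.8 for $\mathcal{U}_\xi$ over $V[G_{\mathbb{P}}\upharpoonright\xi]$, using $\mathcal{F}_\xi(\kappa)=\mathcal{F}(\kappa)\cap V[G_{\mathbb{P}}\upharpoonright\xi]$ from the definition of $\mathcal{U}_\xi$ in Lemma 3.2 for the tail condition, and an induction on the top of the restricted coherent sequences for the first bullet. In short, you have supplied the details the authors left implicit, and they check out.
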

It follows from the above lemma that for each $\xi\in \Lambda$, there exists a projection
\begin{center}
$\pi_{\xi}:\mathbb{P}\ast\dot{\mathbb{Q}}\longrightarrow RO(\mathbb{P}\upharpoonright\xi\ast\dot{\mathbb{Q}}_{\xi})$,
\end{center}
where $RO(\mathbb{P}\upharpoonright\xi\ast\dot{\mathbb{Q}}_{\xi})$ is the Boolean completion of $\mathbb{P}\upharpoonright\xi\ast\dot{\mathbb{Q}}_{\xi}$.
\begin{lemma}\label{a}
$\mathbb{P}\ast\dot{\mathbb{Q}}$ has the $\kappa^{+}$- Knaster property.
\end{lemma}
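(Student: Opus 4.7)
The plan is to reduce the Knaster property of $\MPB * \dot{\MQB}$ to two ingredients: the fact (noted just before Lemma 2.4) that any two Magidor conditions with the same stem are compatible, and the corresponding Knaster property of the Cohen forcing $\MPB = Add(\kappa,\lambda)$ itself.

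First I would verify that $\MPB$ is $\kappa^+$-Knaster via a standard $\Delta$-system argument. Conditions are partial functions from $\kappa \times \lambda$ to $2$ of size $<\kappa$; since $\kappa$ is inaccessible one has $\kappa^{<\kappa}=\kappa$, so the $\Delta$-system lemma applied to any $\kappa^+$ conditions produces $\kappa^+$ of them whose domains form a $\Delta$-system with some root $r$. A further pigeonhole over the $2^{|r|}<\kappa$ possible restrictions to $r$ extracts $\kappa^+$ conditions that agree on $r$, hence are pairwise compatible.

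Next, given a family $\{(p_\alpha, \dot{q}_\alpha) : \alpha < \kappa^+\} \subseteq \MPB * \dot{\MQB}$, the central step is to arrange that the stem of each $\dot{q}_\alpha$ is forced to equal a specific ground-model object. A Magidor stem is a finite sequence of pairs $\langle \alpha_i, A_i \rangle$ with $\alpha_i < \kappa$ and $A_i \subseteq \alpha_i$; since $\MPB$ is $\kappa$-closed it adds no new bounded subsets of $\kappa$, so every such $A_i$ already lies in $V$, and hence the whole stem lies in $V_\kappa$. Thus, extending $p_\alpha$ if necessary, I may assume $p_\alpha \Vdash \mathrm{stem}(\dot{q}_\alpha) = \check{d}_\alpha$ for some $d_\alpha \in V_\kappa$. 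Since $\kappa$ is inaccessible, $|V_\kappa|=\kappa$, so by pigeonhole there is $I \subseteq \kappa^+$ of size $\kappa^+$ on which $d_\alpha = d$ is constant.

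Applying the $\kappa^+$-Knaster property of $\MPB$ to $\{p_\alpha : \alpha \in I\}$, I extract $J \subseteq I$ of size $\kappa^+$ on which the $p_\alpha$ are pairwise compatible. For any distinct $\alpha, \beta \in J$, a common extension $p^* \leq p_\alpha, p_\beta$ in $\MPB$ forces both $\dot{q}_\alpha$ and $\dot{q}_\beta$ to have stem $\check d$; by the stem-compatibility remark, $p^*$ then forces $\dot{q}_\alpha$ and $\dot{q}_\beta$ to be compatible in $\dot{\MQB}$, yielding a common extension of $(p_\alpha, \dot{q}_\alpha)$ and $(p_\beta, \dot{q}_\beta)$ in $\MPB * \dot{\MQB}$. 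This establishes $\kappa^+$-Knaster. The main point to watch is the stem-reduction, which hinges crucially on $\kappa$-closure of $\MPB$ to keep all stems inside $V_\kappa$, a set of size only $\kappa$.
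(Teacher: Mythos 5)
Your proof is correct and takes essentially the same route as the paper's: pigeonhole the stems down to a single fixed $d$ (possible since stems range over a set of size $\kappa$), apply the $\kappa^{+}$-Knaster property of $Add(\kappa,\lambda)$ to the first coordinates, and conclude via the fact that Magidor conditions with the same stem are compatible. Your explicit justification of the stem-reduction --- using the ${<}\kappa$-closure of $\mathbb{P}$ to place all stems in $V_{\kappa}$, a set of size $\kappa$ --- merely fills in a step the paper dispatches with the brief remark that the set of possible stems has size $\kappa$.
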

 \begin{proof}
Assume that a sequence $\mathcal{A}=\{(p_\alpha,\dot{q}_\alpha):\alpha<\kappa^{+}\} \subseteq \mathbb{P}\ast\dot{\mathbb{Q}}$ is given. Let
 \begin{center}
 $\dot{q}_{\alpha}=(\langle \dot{\beta}^{\alpha}_{0},\dot{A}^{\alpha}_{0}\rangle,\dots$
 $,\langle\langle\dot{\beta}^{\alpha}_{m_{\alpha}-1},\dot{A}^{\alpha}_{m_\alpha-1}\rangle,\langle\kappa,\dot{A}^{\alpha}\rangle)$.
  \end{center}
  Now without loss of generality we can assume that $m_{\alpha}=m=m_{\beta}$ for all $\alpha,\beta<\kappa^+$, also we may assume that there exists a $\mathbb{P}$-name $\dot{d}$ such that for all $\alpha < \kappa^+$, $p_{\alpha}\vdash \dot{d}=(\langle\beta^{\alpha}_{0},\dot{A}^{\alpha}_{0}\rangle,\dots,\langle\beta^{\alpha}_{m_{\alpha}},\dot{A}^{\alpha}_{m_\alpha}\rangle)$; this is because the set $\{(\langle\beta^{\alpha}_{0},\dot{A}^{\alpha}_{0}\rangle,\dots,\langle\beta^{\alpha}_{m_{\alpha}},\dot{A}^{\alpha}_{m_\alpha}\rangle,\langle\kappa,\dot{A}_{\alpha}\rangle):\alpha<\kappa^+\}$ has size $\kappa$.
Thus let $\mathcal{A}=\{(p_\alpha,(\dot{d}^{\frown}\langle (\kappa,\dot{A}_{\alpha})\rangle):\alpha<\kappa^+\}$.  As $\MPB$ has the $\kappa^+$-Knaster property, there exists $I\subseteq\kappa^+$ unbounded such that $\langle p_{\alpha}:\alpha \in I\rangle$ consists of  pairwise compatible conditions. This concludes the lemma,
as then any two conditions in $\mathcal{A} \upharpoonright I$ are compatible.
 \end{proof}
\section{Main Forcing Notion}
We are now ready to define our main forcing notion. Thus fix a $\mathbb{P}=Add(\kappa, \lambda)$-generic filter $G_{\MPB}$ over $V$, and let $\mathcal{U} \in V[G_{\MPB}]$ be a coherent sequence of measures
 of length $\kappa+1$ such that $o^{\mathcal{U}}(\kappa) =\delta$. Let $\MQB=\MQB_{\mathcal{U}}$, and let $\mathrm{X}$ be a  $\delta$-sequence cofinal in $\kappa$, which is $\MQB$-generic over $V[G_{\MPB}]$. Fix $\Lambda \in D$ as before. Note that for each $\xi\in \Lambda$, $V[G_{\MPB} \upharpoonright\xi][\mathrm{X}]\subseteq V[G_{\MPB}][\mathrm{X}]$. Let  $\mathbb{R}_{\xi}=Add(\kappa^{+},1)_{V[G_{\MPB}\upharpoonright\xi][\mathrm{X}]}$, for each $\xi\in \Lambda$.
\begin{definition}[Main Forcing]\
\begin{itemize}
\item [(a)] Conditions in $\mathbb{R}$ are  triples $(p,\dot{q},r)$ such that:
\begin{enumerate}
\item $(p,\dot{q})\in \mathbb{P}\ast\dot{\mathbb{Q}}$,
\item $r$ is a partial function with $dom(r)\subseteq \Lambda$ and $|dom(r)|\leq\kappa$,
\item For every $\xi\in dom(r),~r(\xi)$ is a $\mathbb{P}\upharpoonright\xi\ast\dot{\mathbb{Q}}_{\xi}$-name for a condition in $\mathbb{R}_{\xi}$.
\end{enumerate}
\item [(b)] For conditions $(p_0,\dot{q}_0,r_0)$ and $(p_1,\dot{q}_1,r_1)$ in $\mathbb{R}$, we say $(p_1,\dot{q}_1,r_1)\leq (p_0,\dot{q}_0,r_0)$ iff
\begin{enumerate}
\item [(4)]
$(p_1,\dot{q}_1)\leq(p_0,\dot{q}_0)$ in $\mathbb{P}\ast\dot{\mathbb{Q}}$,
\item [(5)]
$dom(r_0)\subseteq dom(r_1)$ and for all $\xi\in dom(r_0)$; $\pi_{\xi}(p_1,\dot{q}_1)\vdash$`` $ r_1(\xi)\leq r_0(\xi)$''.
\end{enumerate}
\end{itemize}
\end{definition}
\begin{lemma}\label{b}
Let $\mathbb{U}=\{(1_{\MPB}, 1_{\MQB}, r): (1_{\MPB}, 1_{\MQB}, r)\in\mathbb{R}\}$, i.e. the set of third coordinates of $\mathbb{R}$. Consider the function $\rho:(\mathbb{P}\ast\dot{\mathbb{Q}})\times\mathbb{U}\longrightarrow\mathbb{R}$ defined by $\rho(\langle(p, \dot{q}), (1_{\MPB}, 1_{\MQB}, r)\rangle)=(p, \dot{q}, r)$. Then
\begin{enumerate}
\item$\mathbb{U}$ is $\kappa^{+}$-closed.
\item$\rho$ is a projection, and
$$V^{\mathbb{P}\ast\dot{\mathbb{Q}}} \subseteq V^{\mathbb{R}}\subseteq V^{(\mathbb{P}\ast\dot{\mathbb{Q}})\times\mathbb{U}}.$$
\end{enumerate}
\end{lemma}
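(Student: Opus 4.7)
To prove (1), I would take a $\leq_{\mathbb{U}}$-decreasing sequence $\langle (1_{\MPB}, 1_{\MQB}, r_{\alpha}) : \alpha < \gamma\rangle$ with $\gamma < \kappa^{+}$, set $D := \bigcup_{\alpha < \gamma}\dom(r_{\alpha}) \subseteq \Lambda$ (of cardinality at most $\kappa$), and observe that for each $\xi \in D$ the sequence $\langle r_{\alpha}(\xi) : \alpha < \gamma,\ \xi \in \dom(r_{\alpha})\rangle$ is forced by $1_{\MPB\upharpoonright\xi \ast \dot{\MQB}_{\xi}}$ to be decreasing in the $\kappa^{+}$-closed poset $\mathbb{R}_{\xi} = Add(\kappa^{+},1)^{V[G_{\MPB}\upharpoonright\xi][\mathrm{X}]}$. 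The maximality principle then produces a name $r(\xi)$ forced to be a common lower bound, and $(1_{\MPB}, 1_{\MQB}, r)$ with $\dom(r) = D$ is the desired extension in $\mathbb{U}$.

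For (2), the inclusion $V^{\MPB \ast \dot{\MQB}} \subseteq V^{\mathbb{R}}$ is witnessed by the forgetful projection $(p, \dot{q}, r) \mapsto (p, \dot{q})$: it is order-preserving, and given $(p', \dot{q}') \leq (p, \dot{q})$ the triple $(p', \dot{q}', r)$ still lies in $\mathbb{R}$, refines $(p, \dot{q}, r)$, and projects to $(p', \dot{q}')$. The other inclusion will follow once $\rho$ is shown to be a projection. Order preservation for $\rho$ is immediate from the definitions; the crux is the lifting property: given $\sigma = \langle (p, \dot{q}), (1_{\MPB}, 1_{\MQB}, r)\rangle$ and a refinement $(p', \dot{q}', r') \leq (p, \dot{q}, r)$ in $\mathbb{R}$, I seek $r''$ so that $\sigma' := \langle (p', \dot{q}'), (1_{\MPB}, 1_{\MQB}, r'')\rangle$ refines $\sigma$ in the product while $\rho(\sigma') \leq (p', \dot{q}', r')$ in $\mathbb{R}$.

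The main obstacle is that the $\mathbb{U}$-ordering requires $1 \vdash r''(\xi) \leq r(\xi)$ for every $\xi \in \dom(r)$, whereas the given refinement only yields $\pi_{\xi}(p', \dot{q}') \vdash r'(\xi) \leq r(\xi)$. I resolve this by mixing in the Boolean completion of $\MPB\upharpoonright\xi \ast \dot{\MQB}_{\xi}$: setting $\dom(r'') := \dom(r')$ and $b_{\xi} := \pi_{\xi}(p', \dot{q}')$, I take $r''(\xi)$ to be a name forced to equal $r'(\xi)$ below $b_{\xi}$ and to equal $r(\xi)$ (or $1_{\mathbb{R}_{\xi}}$ when $\xi \notin \dom(r)$) on the complement $\neg b_{\xi}$. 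Below $b_{\xi}$ we have $r''(\xi) = r'(\xi) \leq r(\xi)$ by hypothesis, and below $\neg b_{\xi}$ we have $r''(\xi) = r(\xi)$; hence $1 \vdash r''(\xi) \leq r(\xi)$, while $b_{\xi} \vdash r''(\xi) = r'(\xi) \leq r'(\xi)$. It follows that $\sigma' \leq \sigma$ in the product and $\rho(\sigma') \leq (p', \dot{q}', r')$ in $\mathbb{R}$, completing the projection and thus the lemma.
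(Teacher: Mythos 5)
Your proposal is correct and follows essentially the same route as the paper: for (1) you take unions of the domains and use the maximality principle to produce names for lower bounds of the tail sequences $\langle r_\alpha(\xi)\rangle$, and for (2) you establish the lifting property by mixing a name $r''(\xi)$ that agrees with $r'(\xi)$ below $\pi_\xi(p',\dot q')$ and with $r(\xi)$ on the incompatible conditions, which is exactly the paper's use of maximal completeness over $RO(\mathbb{P}\upharpoonright\xi\ast\dot{\mathbb{Q}}_\xi)$. Your explicit default to $1_{\mathbb{R}_\xi}$ when $\xi\notin\dom(r)$ and your spelled-out verification that the forgetful map $(p,\dot q,r)\mapsto(p,\dot q)$ is a projection are minor elaborations of steps the paper leaves as obvious.
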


\begin{proof}
\begin{enumerate}
\item
Let $\{(1_{\MPB}, 1_{\MQB}, r_{\xi}):\xi<\kappa\}$ be a decreasing sequence of conditions in $\mathbb{U}$. Then the sequence $\langle dom(r_{\xi}):\xi<\kappa\rangle$ is $\subseteq$-increasing. Let $r$ be a function with $dom(r)=\bigcup\limits_{\xi<\kappa}dom(r_{\xi})$; we are going to define $r$ on this set. If $\alpha\in dom(r)$, then there exists $\xi_{\alpha}$ such that $\alpha\in dom(r_{\xi_{\alpha}})$, and then $\alpha\in dom(r_{\xi})$ for all $\xi\geq\xi_{\alpha}$. We have $$1_{\mathbb{P}\upharpoonright\xi\ast\mathbb{Q}_\xi}\vdash ``\langle r_{\xi}(\alpha):\xi\geq\xi_{\alpha}\rangle~is~a~decreasing~sequence~in~Add(\kappa^{+},1)".$$
 On the other hand $1_{\mathbb{P}\upharpoonright\xi\ast\mathbb{Q}_\xi}\vdash ``Add(\kappa^+,1)~is~\kappa^+\text{-}closed$ '', thus there exists (by maximal completeness) a name $\dot{\tau}_\alpha$ such that $1_{\mathbb{P}\upharpoonright\xi\ast\mathbb{Q}_\xi}$ forces it to be a lower bound for the above-mentioned sequence. Let $r(\alpha)=\dot{\tau}_\alpha$. Now $(1_{\MPB}, 1_{\MQB}, r)$ is a lower bound for $\{(1_{\MPB}, 1_{\MQB}, r_{\xi}):\xi<\kappa\}$.
\item
Clearly, $\rho$ preserves ordering and $\rho(1)=1$. It remains to show that if $(p_1, \dot{q}_1, r_1)\leq \rho(\langle (p_0, \dot{q}_0), (1_{\MPB}, 1_{\MQB}, r_0) \rangle)=(p_0, \dot{q}_0, r_0)$, then there exists
\[
\langle (p_2, \dot{q}_2), (1_{\MPB}, 1_{\MQB}, r_2) \rangle \leq \langle (p_0, \dot{q}_0), (1_{\MPB}, 1_{\MQB}, r_0) \rangle
\]
 such that $\rho(\langle (p_2, \dot{q}_2), (1_{\MPB}, 1_{\MQB}, r_2) \rangle)= (p_2, \dot{q}_2, r_2) \leq (p_1, \dot{q}_1, r_1)$.
 Let $(p_2, \dot{q}_2)=(p_1, \dot{q}_1)$. Put $dom(r_2):=dom(r_1)$. We define $r_2(\xi)$, for  $\xi\in dom(r_2)$, such that
 $(p_2, \dot{q}_2, r_2) \leq (p_1, \dot{q}_1, r_1)$ and  for $\xi\in dom(r_0)$, $1_{\mathbb{P}\upharpoonright\xi\ast\mathbb{Q}_\xi}\vdash r_2(\xi)\leq r_{0}(\xi)$.\\
By maximal completeness,  there exists a name $\dot{\tau}_\xi$ such that $\pi_\xi(p_1, \dot{q}_1) \vdash\dot{\tau}_\xi=\dot{r}_1(\xi)$
and
$(p^*, \dot{q}^*)\vdash \dot{\tau}_\xi=\dot{q}_0(\xi)$, for all $(p^*, \dot{q}^*)\in RO(\MPB\upharpoonright \xi \ast \dot{\MQB}_\xi)$ with
$(p^*, \dot{q}^*)\perp \pi_\xi(p_1, \dot{q}_1)$.
Set $r_2(\xi)=\dot{\tau}_\xi$, which concludes that $\rho$ is a projection.\\
It is also obvious that $\mathbb{P}\ast\dot{\mathbb{Q}}$ is a projection of $\mathbb{R}$, and thus $V^{\mathbb{P}\ast\dot{\mathbb{Q}}} \subseteq V^{\mathbb{R}}\subseteq V^{(\mathbb{P}\ast\dot{\mathbb{Q}})\times\mathbb{U}}.$
\end{enumerate}
\end{proof}
\begin{corollary}
 $V^{\mathbb{R}}$ and $V^{\mathbb{P}\ast\dot{\mathbb{Q}}}$ have the same $\kappa$-sequences of ordinals.
\end{corollary}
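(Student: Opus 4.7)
The plan is to exploit the chain of containments $V^{\mathbb{P}\ast\dot{\mathbb{Q}}} \subseteq V^{\mathbb{R}} \subseteq V^{(\mathbb{P}\ast\dot{\mathbb{Q}}) \times \mathbb{U}}$ established in Lemma \ref{b}. One inclusion of the corollary is immediate: every $\kappa$-sequence of ordinals in $V^{\mathbb{P}\ast\dot{\mathbb{Q}}}$ lies in $V^{\mathbb{R}}$ because the former is a submodel of the latter. The substantive direction is to show that every $\kappa$-sequence of ordinals in $V^{\mathbb{R}}$ already belongs to $V^{\mathbb{P}\ast\dot{\mathbb{Q}}}$.

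For this, I would start with an arbitrary $\kappa$-sequence $s\in V^{\mathbb{R}}$ of ordinals. By Lemma \ref{b}(2), we have $s\in V^{(\mathbb{P}\ast\dot{\mathbb{Q}})\times\mathbb{U}}$, so it suffices to prove that the product forcing $(\mathbb{P}\ast\dot{\mathbb{Q}})\times\mathbb{U}$ adds no new $\kappa$-sequences of ordinals over $V^{\mathbb{P}\ast\dot{\mathbb{Q}}}$. The tool is Easton's lemma. Lemma \ref{a} asserts that $\mathbb{P}\ast\dot{\mathbb{Q}}$ has the $\kappa^+$-Knaster property, so in particular it is $\kappa^+$-c.c. in $V$. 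Lemma \ref{b}(1) asserts that $\mathbb{U}$ is $\kappa^+$-closed in $V$. By Easton's lemma, forcing with the $\kappa^+$-c.c.\ poset $\mathbb{P}\ast\dot{\mathbb{Q}}$ preserves the $\kappa^+$-distributivity of $\mathbb{U}$; equivalently, $\mathbb{U}$ adds no new $\kappa$-sequences of ordinals over $V^{\mathbb{P}\ast\dot{\mathbb{Q}}}$.

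Since $V^{(\mathbb{P}\ast\dot{\mathbb{Q}})\times\mathbb{U}}$ is obtained by forcing with $\mathbb{U}$ over $V^{\mathbb{P}\ast\dot{\mathbb{Q}}}$, we conclude that every $\kappa$-sequence of ordinals in $V^{(\mathbb{P}\ast\dot{\mathbb{Q}})\times\mathbb{U}}$, and in particular our $s$, lies in $V^{\mathbb{P}\ast\dot{\mathbb{Q}}}$. Combined with the trivial inclusion, this yields the equality of $\kappa$-sequences of ordinals in the two models.

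There is no real obstacle here; the only delicate point is remembering that Easton's lemma applies in both directions (preservation of c.c.\ by closed forcing, and preservation of distributivity by c.c.\ forcing), and that it is the second form that is needed in the argument. Everything else is an immediate invocation of the already-proven Lemmas \ref{a} and \ref{b}.
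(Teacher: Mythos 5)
Your proposal is correct and follows essentially the same route as the paper: both pass through the containment $V^{\mathbb{P}\ast\dot{\mathbb{Q}}} \subseteq V^{\mathbb{R}} \subseteq V^{(\mathbb{P}\ast\dot{\mathbb{Q}})\times\mathbb{U}}$ of Lemma \ref{b} and then use the $\kappa^{+}$-c.c.\ of $\mathbb{P}\ast\dot{\mathbb{Q}}$ together with the $\kappa^{+}$-closure of $\mathbb{U}$ to pull the $\kappa$-sequence back into $V^{\mathbb{P}\ast\dot{\mathbb{Q}}}$. The only difference is cosmetic: you invoke the distributivity form of Easton's lemma explicitly, whereas the paper's proof cites the closure of $\mathbb{U}$ and leaves the Easton step implicit (it records Easton's lemma, including the relevant clause, immediately afterwards).
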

\begin{proof}
Suppose $f:\kappa\longrightarrow Ord$ is a $\kappa$-sequnece in $V^{\mathbb{R}}$.  Then by Lemma 4.2(2), $f\in V^{(\mathbb{P}\ast\dot{\mathbb{Q}})\times\mathbb{U}}$. By
Lemma 4.2(1),  $\mathbb{U}$ is $\kappa^{+}$-closed, and hence  $f\in V^{\mathbb{P}\ast\dot{\mathbb{Q}}}$, which completes the proof.
\end{proof}
Before we continue, let us recall Easton's lemma.
\begin{lemma}[Easton's lemma] Assume $\kappa$ is an infinite cardinal, $\mathbb{P}$
is a $\kappa^+$-c.c. forcing, and $\mathbb{Q}$ is a $\kappa^+$-closed forcing. If $G \times H$ is $\mathbb{P} \times \mathbb{Q}$-generic over $V$, then
\begin{enumerate}
\item $\Vdash_{\mathbb{Q}}$``$\mathbb{P}$ is $\kappa^+$-c.c.''.
\item $\Vdash_{\mathbb{P}}$``$\mathbb{Q}$ is $\kappa^+$-distributive''.
\item If $f: \kappa \to V$ and $f \in V[G \times H],$ then $f \in V[G].$
\end{enumerate}
\end{lemma}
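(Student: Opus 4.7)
My plan is to establish items (1), (3), and (2), with (2) coming out as a quick corollary of (3).

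For (1) I argue by contradiction. Suppose some $q_{0}\in\MQB$ and a $\MQB$-name $\dot A$ satisfy $q_{0}\Vdash``\dot A$ is an antichain in $\check\MPB$ of cardinality $\kappa^{+}$''. Working in $V$, I recursively construct a decreasing sequence $\langle q_{\alpha}:\alpha<\kappa^{+}\rangle$ below $q_{0}$ in $\MQB$, together with pairwise distinct $p_{\alpha}\in\MPB$ such that $q_{\alpha}\Vdash\check p_{\alpha}\in\dot A$. At stage $\alpha<\kappa^{+}$ the previously chosen conditions form a set of size $|\alpha|\leq\kappa$, so $\kappa^{+}$-closure of $\MQB$ supplies a common lower bound, below which, using that $\dot A$ is forced to have size $\kappa^{+}$, one can strengthen further to name some fresh $p_{\alpha}$. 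For $\beta<\alpha$ we have $q_{\alpha}\leq q_{\beta}$, whence $q_{\alpha}\Vdash\{\check p_{\alpha},\check p_{\beta}\}\subseteq\dot A$ and therefore $q_{\alpha}\Vdash p_{\alpha}\perp p_{\beta}$; since incompatibility is absolute, $p_{\alpha}\perp p_{\beta}$ already in $V$. This yields a $\kappa^{+}$-sized antichain of $\MPB$ in $V$, contradicting the $\kappa^{+}$-c.c.\ of $\MPB$.

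For (3), let $\dot f$ be a $\MPB\times\MQB$-name for a function $\kappa\to V$. In $V$, for each $\alpha<\kappa$ put $D_{\alpha}=\{q\in\MQB:\text{some maximal antichain }B\subseteq\MPB\text{ has }(p,q)\text{ deciding }\dot f(\alpha)\text{ for every }p\in B\}$. I verify density of $D_{\alpha}$: given $q\in\MQB$, the set $\{p\in\MPB:\exists q'\leq q,\ (p,q')\text{ decides }\dot f(\alpha)\}$ is dense in $\MPB$, so by the $\kappa^{+}$-c.c.\ it contains a maximal antichain $\{p_{i}:i<\mu\}$ with $\mu\leq\kappa$; gluing the corresponding witnesses $q_{i}\leq q$ into a single common lower bound via $\kappa^{+}$-closure of $\MQB$ produces a condition in $D_{\alpha}$ below $q$. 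A second use of $\kappa^{+}$-closure—building a length-$\kappa$ decreasing chain meeting $D_{\alpha}$ at stage $\alpha$—shows $\bigcap_{\alpha<\kappa}D_{\alpha}$ is dense in $\MQB$ in $V$. By genericity of $H$ over $V$, pick $q\in H\cap\bigcap_{\alpha<\kappa}D_{\alpha}$, and using AC in $V$ fix a witness $B_{\alpha}$ for each $\alpha$. Each $B_{\alpha}\in V$ is a maximal antichain of $\MPB$, so $G$ meets it at a unique $p_{\alpha}$, and the value decided by $(p_{\alpha},q)\in G\times H$ for $\dot f(\alpha)$ equals $f(\alpha)$. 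Since $\dot f$, $q$, the sequence $\langle B_{\alpha}\rangle$, and the decided values all lie in $V\subseteq V[G]$, the function $f$ is computable in $V[G]$ from these parameters and $G$; hence $f\in V[G]$.

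Item (2) is now immediate: a $\MQB$-name in $V[G]$ for a function $\kappa\to\mathrm{Ord}$ corresponds to a $\MPB\times\MQB$-name in $V$, so by (3) its evaluation lies in $V[G]$, i.e.\ $\MQB$ adds no new $\kappa$-sequences of ordinals over $V[G]$, which is $\kappa^{+}$-distributivity. The main obstacle is the interleaved density argument in (3): one must invoke the $\kappa^{+}$-c.c.\ of $\MPB$ to keep the witnessing antichain $B$ of size $\leq\kappa$, and then deploy $\kappa^{+}$-closure of $\MQB$ twice—first to weld the $\mu$-many $q_{i}$ into a single member of $D_{\alpha}$, and then to diagonalize through all $\alpha<\kappa$ to obtain density of $\bigcap_{\alpha}D_{\alpha}$. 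Once this is in hand, the recovery of $f$ in $V[G]$ hinges only on the trivial-but-essential point that the selected $q$, though determined by $H$, is a ground-model element of $\MQB$ and so belongs to $V[G]$ along with the $B_{\alpha}$'s.
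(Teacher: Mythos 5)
The paper itself gives no proof of this lemma---it is recalled as a standard fact---so your proposal can only be measured against the classical argument, which is indeed the route you take: item (1) by turning a forced antichain into a ground-model antichain along a $\kappa$-length-bounded decreasing sequence, item (3) by a density argument with maximal antichains of $\mathbb{P}$ of size $\leq\kappa$, and item (2) as a corollary of (3) via the identification of $\mathbb{Q}$-names over $V[G]$ with $\mathbb{P}\times\mathbb{Q}$-names. Your proof of (1), your diagonalization showing $\bigcap_{\alpha<\kappa}D_\alpha$ is dense (the $D_\alpha$ are open, so a decreasing $\kappa$-chain meeting each one suffices), and your reconstruction of $f$ in $V[G]$ from $q$, $\langle B_\alpha\rangle\in V$ and the definable forcing relation are all correct.

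There is, however, one genuinely broken step: your proof that $D_\alpha$ is dense. You first fix a maximal antichain $\{p_i:i<\mu\}$ inside the dense set $E=\{p:\exists q'\leq q,\ (p,q')\text{ decides }\dot f(\alpha)\}$, then choose witnesses $q_i\leq q$ independently, and propose to ``glue'' them into a single common lower bound by $\kappa^+$-closure. But $\kappa^+$-closure only provides lower bounds for \emph{decreasing} sequences; the $q_i$ form an arbitrary family below $q$ and may be pairwise incompatible (take $\mathbb{Q}=Add(\kappa^+,1)$ and two conditions extending $q$ in contradictory ways), so no common lower bound need exist. Nor can you repair this after the fact by shrinking the $q_i$: for a \emph{fixed} $p_i$, the set of $q'$ with $(p_i,q')$ deciding $\dot f(\alpha)$ need not be dense below $q$, since deciding the value may force one to strengthen the first coordinate too. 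The standard fix is to interleave the two constructions: recursively build a decreasing sequence $q=q_0\geq q_1\geq\cdots$ in $\mathbb{Q}$ together with an antichain $p_0,p_1,\dots$ in $\mathbb{P}$ such that $(p_i,q_{i+1})$ decides $\dot f(\alpha)$, at each step choosing $p_i$ incompatible with all earlier $p_j$ when possible and using closure at limits; the $\kappa^+$-c.c.\ of $\mathbb{P}$ forces the recursion to halt at some $\mu\leq\kappa$ with $\{p_i:i<\mu\}$ maximal, and then any lower bound $q^*$ of the chain lies in $D_\alpha$, because decisions persist when the second coordinate is strengthened. Your closing remark about ``the interleaved density argument'' suggests you are aware of this point, but the argument you actually wrote is the non-interleaved version, and as written it fails; with the recursion interleaved as above, the rest of your proof goes through.
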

\begin{corollary}
 Let $G_{\MRB}$ be an $\mathbb{R}$-generic filter over $V$ and let $G_{\mathbb{P}\ast\dot{\mathbb{Q}}}$ be the $\mathbb{P}\ast\dot{\mathbb{Q}}$-generic filter induced by $G_{\MRB}$. If $E$ is a set of ordinals of size $\kappa$, then $E\in V[G_{\mathbb{P}\ast\dot{\mathbb{Q}}}]$.
\end{corollary}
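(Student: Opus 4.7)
This is essentially an immediate consequence of Corollary 4.3, once one observes that a set of ordinals of size $\kappa$ can be coded by a $\kappa$-sequence of ordinals. The plan is to enumerate $E$ by a bijection and transfer that enumeration.

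Working in $V[G_{\mathbb{R}}]$, I would fix any bijection $f \colon \kappa \to E$ (which exists because $|E|=\kappa$), so that $E = \mathrm{ran}(f)$ and $f$ is a $\kappa$-sequence of ordinals in $V[G_{\mathbb{R}}]$. Corollary 4.3 then gives $f \in V[G_{\mathbb{P}*\dot{\mathbb{Q}}}]$, and hence $E = \mathrm{ran}(f) \in V[G_{\mathbb{P}*\dot{\mathbb{Q}}}]$, as required.

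The only detail that merits a brief verification is that $\kappa$ is still a cardinal of $V[G_{\mathbb{R}}]$, so that the hypothesis $|E|=\kappa$ produces an honest bijection from $\kappa$ onto $E$. I would get this by invoking Lemma 4.2(2): $V[G_{\mathbb{R}}] \subseteq V[G_{\mathbb{P}*\dot{\mathbb{Q}}} \times H_{\mathbb{U}}]$ for a suitable $\mathbb{U}$-generic $H_{\mathbb{U}}$. In $V[G_{\mathbb{P}*\dot{\mathbb{Q}}}]$, $\kappa$ remains a cardinal because $\mathbb{P}$ is $\kappa$-closed and Magidor forcing preserves cardinals (as observed after Lemma 2.7); further forcing by the $\kappa^{+}$-closed poset $\mathbb{U}$ (Lemma 4.2(1)) keeps $\kappa$ a cardinal, so it is a cardinal in the intermediate model $V[G_{\mathbb{R}}]$ as well.

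There is no substantial obstacle here: the corollary is merely a repackaging of 4.3 for sets rather than sequences, and the only ingredient beyond Corollary 4.3 is the routine cardinal-preservation observation above.
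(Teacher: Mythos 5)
Your proof is correct and takes essentially the same route as the paper's: the paper applies Lemma 4.2 together with Easton's lemma (Lemma 4.4) to the inclusion $V[G_{\mathbb{R}}]\subseteq V[G_{\mathbb{P}\ast\dot{\mathbb{Q}}}][H]$, which is precisely the content of Corollary 4.3 that you invoke, and your explicit enumeration of $E$ as a $\kappa$-sequence of ordinals is the coding step left implicit in the paper's appeal to Easton's lemma. Your closing verification that $\kappa$ remains a cardinal is harmless but unnecessary, since the hypothesis that $E$ has size $\kappa$ in $V[G_{\mathbb{R}}]$ already furnishes the required bijection $f\colon\kappa\to E$ in that model.
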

\begin{proof}
Let $H$, a $\mathbb{U}$-generic filter, be such that $V[G_{\MRB}]\subseteq V[G_{\mathbb{P}\ast\dot{\mathbb{Q}}}][H]$. As $\mathbb{P}\ast\dot{\mathbb{Q}}$ is $\kappa^+$-c.c., and $\mathbb{U}$ is $\kappa^+$-closed, Easton's lemma ensures us that $E\in V[G_{\mathbb{P}\ast\dot{\mathbb{Q}}}]$ as required.
\end{proof}

\begin{lemma}
$\mathbb{R}$  has the following properties:
\begin{enumerate}
\item $\mathbb{R}$  is $\lambda$-Knaster, hence it preserves cardinals $\geq\lambda$.
\item $\mathbb{R}$ preserves $\kappa^+$.

\item $V^{\mathbb{R}}\models 2^{\kappa}=\lambda$.

\item $\mathbb{R}$ preserves cardinals below $\kappa^+$.

\item  $\mathbb{R}$ collapses the cardinals in the inteval $(\kappa^{+},\lambda)$ onto $\kappa^{+}$, so $\lambda=\kappa^{++}$.
\end{enumerate}
\end{lemma}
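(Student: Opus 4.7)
The plan is to dispatch (1)--(4) using the factorization machinery already in place, and then to attack (5) by a Mitchell-style block-coding. For (1), the proof runs a $\Delta$-system argument on the third coordinates: given $\{(p_\alpha,\dot q_\alpha,r_\alpha):\alpha<\lambda\}$, the supports $\dom(r_\alpha)\subseteq\Lambda$ have size $\leq\kappa$, so (since $\lambda>\kappa$ is regular) the $\Delta$-system lemma yields an unbounded $I_0\subseteq\lambda$ with common root $R$. Applying the $\kappa^+$-Knaster property of $\mathbb P*\dot{\mathbb Q}$ (Lemma~\ref{a}) refines this to $I_1$ with pairwise compatible first two coordinates, and the inaccessibility of $\lambda$ bounds the number of possible values of $r_\alpha\upharpoonright R$ below $\lambda$, so a further pigeonhole refinement to $I_2$ stabilizes $r_\alpha\upharpoonright R$; coordinatewise amalgamation then witnesses pairwise compatibility. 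Claims (2) and (4) reduce to cardinal-preservation of $\mathbb P*\dot{\mathbb Q}$: a collapse of any $\mu\leq\kappa^+$ would be witnessed by a function of length $\leq\kappa$, which by Corollary 4.3 already lives in $V^{\mathbb P*\dot{\mathbb Q}}$---but that forcing preserves cardinals $\leq\kappa^+$ (as $\mathbb P$ is $\kappa$-directed closed, $\mathbb Q$ preserves cardinals by the remark after Lemma 2.7, and the $\kappa^+$-c.c.\ of $\mathbb P*\dot{\mathbb Q}$ takes care of $\kappa^+$). For (3), the upper bound follows from Corollary 4.5 together with $|\mathbb P*\dot{\mathbb Q}|=\lambda$, while the lower bound follows from $V^{\mathbb P}\subseteq V^{\mathbb R}$ combined with the preservation of $\lambda$ by (1).

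The heart of the proof is (5), and the main obstacle concentrates there. For $\mu\in(\kappa^+,\lambda)$, fix $\xi\in\Lambda$ with $\xi>\mu$; the strategy is to produce in $V^{\mathbb R}$ a surjection $\kappa^+\twoheadrightarrow\xi$. From the $\mathbb R$-generic one reads off, at coordinate $\xi$, a filter $g_\xi$ on $\mathbb R_\xi=Add(\kappa^+,1)^{V[G_{\mathbb P\upharpoonright\xi}][X]}$, and the crucial technical point is that $g_\xi$ must be genuinely $\mathbb R_\xi$-generic over $V[G_{\mathbb P\upharpoonright\xi}][X]$. To secure this one runs a density argument internal to $\mathbb R$: given a $\mathbb P\upharpoonright\xi*\dot{\mathbb Q}_\xi$-name $\dot D$ for a dense open subset of $\mathbb R_\xi$, one strengthens only the $\xi$-th third-coordinate entry into $\dot D$ without touching the first two coordinates, and then transfers the conclusion through the projection $\pi_\xi$ of Lemma 3.3. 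This density bookkeeping---keeping the first two coordinates fixed while still realizing $\dot D$ in the right ground model---is what I expect to be the most delicate part of the proof.

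Granting the genericity of $g_\xi$, the collapse itself is routine. In $V[G_{\mathbb P\upharpoonright\xi}][X]$ one has $2^\kappa=\xi$, so fix there an enumeration $\langle A_\alpha:\alpha<\xi\rangle$ of $P(\kappa)$. Decoding $g_\xi\colon\kappa^+\to 2$ block-by-block yields $\kappa^+$ subsets $B_\beta:=\{\gamma<\kappa:g_\xi(\kappa\cdot\beta+\gamma)=1\}$ of $\kappa$. For each $A_\alpha$ the set of $r\in\mathbb R_\xi$ whose block at some fresh $\beta$ codes $A_\alpha$ is dense in $\mathbb R_\xi$, so by genericity $\beta\mapsto B_\beta$ surjects $\kappa^+$ onto $\{A_\alpha:\alpha<\xi\}$; composing with the enumeration produces the desired surjection $\kappa^+\twoheadrightarrow\xi$, and hence $|\mu|^{V^{\mathbb R}}\leq\kappa^+$. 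Combined with (1), (2), and (4), this yields $\lambda=\kappa^{++}$ in $V^{\mathbb R}$, completing the argument.
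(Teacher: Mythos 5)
Your items (1)--(4) track the paper's proof almost exactly: the same $\Delta$-system-plus-name-counting argument for $\lambda$-Knasterness, the same reduction of (2) and (4) to Corollary 4.3/4.5 together with the $\kappa^+$-c.c.\ of $\mathbb{P}\ast\dot{\mathbb{Q}}$, its cardinal preservation, and the $\kappa^+$-closure of $\mathbb{U}$, and the same identity $P(\kappa)\cap V[G_{\mathbb{R}}]=P(\kappa)\cap V[G_{\mathbb{P}\ast\dot{\mathbb{Q}}}]$ for (3). (One wording slip, which the paper's own text shares: to extract a $\lambda$-sized pairwise compatible subfamily of first coordinates you need the $\lambda$-Knaster property of $\mathbb{P}\ast\dot{\mathbb{Q}}$, not the $\kappa^+$-Knaster property of Lemma 3.4; it holds by rerunning that lemma's stem-stabilization at $\lambda$, using inaccessibility of $\lambda$, and your pigeonhole on $r_\alpha\upharpoonright R$ likewise requires first passing to canonical names so that there are fewer than $\lambda$ possibilities.) Where you genuinely diverge is (5). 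The paper disposes of it in one line: $\sigma_\xi(p,\dot{q},r)=(p\upharpoonright\xi,r(\xi))$ is a projection of $\mathbb{R}$ onto the Mitchell-style iteration $Add(\kappa,\xi)\ast \dot{Add}(\kappa^+,1)$, which is cited as known (as in Cummings--Foreman) to collapse $\xi$ onto $\kappa^+$. You instead extract the coordinate filter $g_\xi$, prove by a density/mixing argument inside $\mathbb{R}$ that it is genuinely $Add(\kappa^+,1)$-generic over $V[G_{\mathbb{P}\upharpoonright\xi}][X]$, and then reprove the collapsing fact by the standard block-coding of $P(\kappa)$. This is an unpacked, and in fact slightly more careful, version of the same idea: since $r(\xi)$ is a $\mathbb{P}\upharpoonright\xi\ast\dot{\mathbb{Q}}_\xi$-name, the paper's $\sigma_\xi$ is really a projection onto $(\mathbb{P}\upharpoonright\xi\ast\dot{\mathbb{Q}}_\xi)\ast\dot{Add}(\kappa^+,1)$ rather than onto $Add(\kappa,\xi)\ast\dot{Add}(\kappa^+,1)$ as literally written, and your formulation, working over the correct intermediate model $V[G_{\mathbb{P}\upharpoonright\xi}][X]$, absorbs this. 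The price is that you must verify $2^\kappa=\xi$ in that model; this does hold, since $\xi\in\Lambda$ is Mahlo, hence strong limit in $V$, so $Add(\kappa,\xi)$ forces $2^\kappa=\xi$, and $\mathbb{Q}_\xi$ is $\kappa^+$-c.c.\ of size $\xi$, so it preserves the computation. In short: the paper's citation buys brevity, while your route buys self-containment and repairs the typing of the projection; both are correct.
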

\begin{proof}\
\begin{enumerate}
\item Suppose that $\langle(p_\alpha,\dot{q}_\alpha,r_\alpha):~\alpha<\lambda\rangle \subseteq \mathbb{R}$ is given. By refining the sequence and using Lemma \ref{a}, we may assume that $\langle(p_\alpha, \dot{q}_\alpha):\alpha<\lambda\rangle$  consists of  pairwise compatible conditions. Then using this fact and counting of the possible names as values of $r_\alpha$, one can refine the sequence further to obtain a  subsequent of size $\lambda$ consisting of pairwise compatible conditions.
\item Suppose $f:\kappa\longrightarrow \kappa^+$ is in $V[G_{\MRB}]$ for an $\mathbb{R}$-generic filter $G_{\MRB}$.  Then $f\in V[G_{\mathbb{P}\ast\dot{\mathbb{Q}}}]$ by Corollary 4.4. On the other hand the forcing $\mathbb{P}\ast\dot{\mathbb{Q}}$ is  $\kappa^+$-c.c., so $f$ is bounded. As a consequence $\kappa^+$ is not collapsed.

\item We have $P(\kappa)\cap V[G_{\MRB}]=P(\kappa)\cap V[G_{\mathbb{P}\ast\dot{\mathbb{Q}}}]$, which easily implies $2^{\kappa}=\lambda$.

\item it is obvious as $\mathbb{P}\ast\dot{\mathbb{Q}}$ preserves cardinals and $\mathbb{U}$ is $\kappa^+$-closed.
\item Let $\kappa^+ < \xi < \lambda$. It is easy to see that the function $\sigma_\xi:\mathbb{R}\longrightarrow Add(\kappa,\xi)\ast A\dot{d}d(\kappa^+,1)$ defined by $\sigma_\xi(p,\dot{q},r)=(p \upharpoonright \xi, r(\xi))$ is a projection ( if $\xi\notin dom(r)$, set $r(\xi)=\check{\emptyset}$); but then $Add(\kappa,\xi)\ast A\dot{d}d(\kappa^+,1)$ collapses $\xi$ onto $\kappa^+$, thus forcing with $\mathbb{R}$ does the collapsing.
\end{enumerate}

\end{proof}

It follows from the above results that in every generic extention of $V$ by $\mathbb{R}$,  items 1 and 2 of   Theorem 1.1 are valid. It remains to show that the tree property also holds at $\kappa^{++}$. The rest of the paper is devoted to prove this fact.
 \section{The tree Property in $V^{\mathbb{R}}$}
Let $G_{\MRB}$ be $\mathbb{R}$-generic over $V$ and let $T\in V[G_{\MRB}]$ be a $\lambda$-tree. We show that $T$ has a cofinal branch in $V[G_{\MRB}]$.
 Recall that in $V$, we have fixed an elementary embedding $j: V \to M \simeq \Ult(V, D)$, where $D$ is a normal measure on $\lambda.$ 
 We have
 \[
 j(\mathbb{P})=Add(\kappa, j(\lambda)) = Add(\kappa, \lambda) \times Add(\kappa, [\lambda, j(\lambda))) = \mathbb{P} \times Add(\kappa, [\lambda, j(\lambda))).
 \]
 Thus, working in $V[G_{j(\mathbb{P})}],$
 we can extend $j$ to some $j: V[G_{\mathbb{P}}] \to M[j(G_{\mathbb{P}})]$.
 Assume $X$ is $\mathbb{Q}^M_{j(\mathcal{U})}$-generic over $V[G_{j(\mathbb{P})}].$ Then thanks to the characterization lemma,
$X$ is   $\mathbb{Q}$-generic over $V[G_{\mathbb{P}}]$ as well. Note that clearly   $j(\mathrm{X})=\mathrm{X}$ holds.
 Consider $j(\mathbb{R})$ whose conditions are triples $(p,\dot{q},r)$ satisfying the following:
\begin{enumerate}
\item
$(p,\dot{q})\in j(\MPB)\ast\dot{\mathbb{Q}}^{M}_{j(\mathcal{U})}$,
\item $r$ is a parial function with $dom(r)\subseteq j(\Lambda)$ and $|\dom(r)|\leq\kappa$,
\item For every $\xi\in \dom(r)$, $~r(\xi)$ is a $j(\mathbb{P})\upharpoonright\xi\ast\dot{\mathbb{Q}}^{M}_{j(\mathcal{U})_\xi}$-name for a condition in $j(\mathbb{R})_{\xi}=Add(\kappa^{+},1)_{V[G_{j(\mathbb{P}) \upharpoonright \xi}][\mathrm{X}]}$.
 \end{enumerate}
 It is easily seen that we have a projection from $j(\MRB)$ onto $\MRB,$ that we call it $\varrho.$

 \begin{lemma}
$T$ has a cofinal branch in $V[j(\mathbb{R})]$.
\begin{proof}
Recall from the above that we have a projection $\varrho: j(\MRB) \to \MRB$. Hence one can find $G_{j(\MRB)}$ such that $G_{j(\MRB)}$  is $j(\MRB)$-generic over $V$ and $\varrho[G_{j(\MRB)}]=G_{\MRB}$. Then we can lift $ j$ to some $\bar j: V[G_{\MRB}] \to M[G_{j(\MRB)}]$,
which is defined in $V[G_{j(\MRB)}].$
Consider $\bar j(T) \in M[G_{j(\MRB)}]$ that is a $j(\lambda)$-tree with $\bar j(T)\upharpoonright\lambda=T$, and since $j(\lambda) > \lambda$, we can take a node $t^*$ in the $\lambda$-th level of $\bar j(T)$, then
\[
b=\{t \in \bar j(T): t <_{\bar j(T)} t^*       \} = \{ t \in T: t <_{\bar j(T)} t^*     \}
\]
forms a cofinal branch of $T$ in $M[G_{j(\MRB)}]\subseteq V[G_{j(\MRB)}]$.
\end{proof}
\end{lemma}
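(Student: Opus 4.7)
The plan is to build, in an outer generic extension, a lift $\bar j\colon V[G_{\MRB}]\to M[G_{j(\MRB)}]$ of the ultrapower embedding $j\colon V\to M$, and then read a cofinal branch of $T$ off $\bar j(T)$. Granting the lift, branch-extraction is standard: $\bar j(T)$ is a $j(\lambda)$-tree in $M[G_{j(\MRB)}]$, and because $\crit(j)=\lambda$ while every level of $T$ has size less than $\lambda$, the embedding $\bar j$ pointwise fixes each level of $T$; hence $\bar j(T)\restricted\lambda=T$. Any node $t^*$ on the $\lambda$-th level of $\bar j(T)$ then yields
\[
b=\{t\in T:t<_{\bar j(T)}t^*\},
\]
a cofinal branch of $T$ lying in $V[G_{j(\MRB)}]$.

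The lift itself is constructed in three stages. First, using $\crit(j)=\lambda$ and the factorization $j(\MPB)\cong\MPB\times Add(\kappa,[\lambda,j(\lambda)))^M$ inside $M$, choose an $M$-generic filter $H$ for the upper factor and set $j(G_{\MPB})=G_{\MPB}\times H$, producing a lift $j\colon V[G_{\MPB}]\to M[j(G_{\MPB})]$. Second, invoke the Characterization Theorem~2.8 to conclude that the Magidor sequence $\mathrm{X}$ is also $\MQB^M_{j(\mathcal U)}$-generic over $M[j(G_{\MPB})]$; since $\crit(j)=\lambda>\kappa$ the conditions of $\MQB$ are pointwise fixed by $j$, so $j(\mathrm{X})=\mathrm{X}$ and the embedding extends through the Magidor step. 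Third, the natural map
\[
\varrho\colon j(\MRB)\to\MRB,\qquad (p,\dot q,r)\mapsto\bigl(p\restricted\lambda,\dot q,r\restricted\Lambda\bigr),
\]
is a projection of forcing notions; via the quotient $j(\MRB)/G_{\MRB}$ we choose a $V$-generic $G_{j(\MRB)}$ for $j(\MRB)$ with $\varrho[G_{j(\MRB)}]=G_{\MRB}$ and $j''G_{\MRB}\subseteq G_{j(\MRB)}$, and then set $\bar j(\dot x^{G_{\MRB}})=j(\dot x)^{G_{j(\MRB)}}$ in the usual way.

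The main technical obstacle is this third stage. Two items need care: verifying that $\varrho$ is genuinely order-preserving with dense image (so that it is a projection), and arranging for $G_{j(\MRB)}$ to contain the pointwise image $j''G_{\MRB}$, which is what the lift demands, rather than merely projecting onto $G_{\MRB}$. The first reduces, coordinate by coordinate, to elementarity applied to the explicit definition of $\MRB$. The second is a master-condition style consideration: the extra part of $j(\MRB)$ lives on $[\lambda,j(\lambda))$ in the Cohen coordinate and on $j(\Lambda)\setminus\Lambda$ in the $r$-coordinate, and is sufficiently closed in $M$ to absorb the image $j''G_{\MRB}$; once a compatible lower bound (a master condition) is placed into the generic, the lift $\bar j$ is well defined and the branch $b$ is obtained as above.
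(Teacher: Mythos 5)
Your outline coincides with the paper's: project $j(\MRB)$ onto $\MRB$ via $\varrho$, choose $G_{j(\MRB)}$ with $\varrho[G_{j(\MRB)}]=G_{\MRB}$, lift to $\bar j\colon V[G_{\MRB}]\to M[G_{j(\MRB)}]$, and extract the branch from a node on level $\lambda$ of $\bar j(T)$; your branch-extraction paragraph is correct. The genuine gap is your stage~2. Theorem~2.8 transfers Magidor genericity \emph{downward}, and that is exactly how the paper uses it: it \emph{assumes} that $\mathrm{X}$ is $\MQB^{M}_{j(\mathcal{U})}$-generic over $V[G_{j(\MPB)}]$ and concludes that $\mathrm{X}$ is $\MQB$-generic over the smaller model $V[G_{\MPB}]$. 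You run the implication in the opposite direction, which the theorem does not license and which is in general false: to see that $\mathrm{X}$ is $\MQB^{M}_{j(\mathcal{U})}$-generic over $M[j(G_{\MPB})]$ you must verify the tail condition for \emph{every} $X'\in P(\kappa)\cap M[j(G_{\MPB})]$, and the upper factor $Add(\kappa,[\lambda,j(\lambda)))$ adds new subsets of $\kappa$; genericity of $\mathrm{X}$ over $V[G_{\MPB}]$ says nothing about how tails of $\mathrm{X}$ sit inside sets that are generic over $V[G_{\MPB}][\mathrm{X}]$. Indeed, if this upward transfer were automatic, the quotient $(j(\MPB)\ast\dot{\MQB}^{M}_{j(\mathcal{U})})/G_{\MPB\ast\dot{\MQB}}$ would essentially reduce to its Cohen factor and the analysis in Claim~5.4 would be superfluous. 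The repair is to invert your order of construction, as the paper implicitly does: $G_{j(\MRB)}$ is obtained in one step by forcing with the quotient $j(\MRB)/G_{\MRB}$ over $V[G_{\MRB}]$ (its existence is what the projection $\varrho$ buys), and the filter $H$ of your stage~1 together with the upgraded genericity of $\mathrm{X}$ of your stage~2 are then \emph{read off} from $G_{j(\MRB)}$, not fixed in advance.

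Your master-condition worry at stage~3 is misplaced, though harmlessly so. Since $\crit(j)=\lambda$ and every coordinate of a condition $s=(p,\dot q,r)\in\MRB$ has size at most $\kappa$ with all ordinal parameters below $\lambda$, the condition $j(s)$ is (equivalent to) $s$ itself; moreover every $t\in j(\MRB)$ essentially extends $\varrho(t)$. Hence $\varrho[G_{j(\MRB)}]=G_{\MRB}$ already gives $G_{\MRB}\subseteq G_{j(\MRB)}$ up to this identification, i.e. $j''G_{\MRB}\subseteq G_{j(\MRB)}$, and the lift exists with no master condition at all. Trying to absorb $j''G_{\MRB}$ below a single condition would in any case be hopeless, since $|G_{\MRB}|=\lambda$ while the side part of $j(\MRB)$ is only $\kappa^{+}$-closed.
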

Let $b$ be a cofinal branch in $V[G_{j(\MRB)}]$. We will show that $b\in V[G_{\MRB}]$. We will do this by showing that passing from $V[G_{\MRB}]$ to $V[G_{j(\MRB)}]$ by $j(\MRB)/ G_{\MRB}$ does not add a cofinal branch through $T$. For this we need a careful analysis of the quotient forcing $j(\mathbb{R})/G_{\MRB}$.
\begin{lemma}
In $V[G_{\MRB}]$, there exists a $\kappa^{+}$-closed forcing notion $\mathbb{U}^{*}$ such that there is a projection $\pi$ from $((j(\mathbb{P})\ast\dot{\mathbb{Q}}^{M}_{j(\mathcal{U})})/G_{\mathbb{P}\ast\dot{\mathbb{Q}}})\times\mathbb{U}^{*}$
onto $j(\mathbb{R})/G_{\mathbb{R}}$ (where $G_{\mathbb{P}\ast\dot{\mathbb{Q}}}$ is obtained from $G_{\MRB}$ in the natural way).
\end{lemma}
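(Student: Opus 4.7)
The plan is to imitate, inside $V[G_{\MRB}]$, the product decomposition of Lemma 4.2. By elementarity, $j(\mathbb{R})$ is, in $M$, a projection of $(j(\mathbb{P})\ast\dot{\mathbb{Q}}^{M}_{j(\mathcal{U})})\times j(\mathbb{U})^{M}$, where $j(\mathbb{U})^{M}$ consists of ``pure'' third-coordinate conditions $(1,1,r)$ and is $\kappa^{+}$-closed in $M$. Since $M^{\lambda}\subseteq M$, $j(\mathbb{U})^{M}$ remains $\kappa^{+}$-closed in $V$.

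First I would define $\mathbb{U}^{*}$ in $V[G_{\MRB}]$ as the suborder of $j(\mathbb{U})^{M}$ consisting of those conditions $(1,1,r)$ lying in the quotient $j(\mathbb{R})/G_{\MRB}$ (equivalently, compatible in $j(\mathbb{R})$ with every element of $G_{\MRB}$), equipped with the induced order. The projection is the natural map
\[
\pi\bigl(\langle(p,\dot{q}),(1,1,r)\rangle\bigr)=(p,\dot{q},r).
\]
Order-preservation and $\pi(1)=1$ are immediate. For the defining property of a projection, given $(p_{1},\dot{q}_{1},r_{1})\leq(p_{0},\dot{q}_{0},r_{0})$ in $j(\mathbb{R})/G_{\MRB}$, I set $(p_{2},\dot{q}_{2})=(p_{1},\dot{q}_{1})$ and, following the template of Lemma 4.2(2), use maximal completeness to construct a name $r_{2}$ on $\dom(r_{1})$ with $r_{2}(\xi)$ forced by $\pi_{\xi}(p_{1},\dot{q}_{1})$ to equal $r_{1}(\xi)$ and forced by conditions of $RO(j(\MPB)\restricted\xi\ast\dot{\MQB}^{M}_{j(\mathcal{U})_{\xi}})$ incompatible with $\pi_{\xi}(p_{1},\dot{q}_{1})$ to equal $r_{0}(\xi)$. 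Direct verification then shows $(1,1,r_{2})\in\mathbb{U}^{*}$ and $\pi(\langle(p_{2},\dot{q}_{2}),(1,1,r_{2})\rangle)=(p_{1},\dot{q}_{1},r_{2})\leq(p_{1},\dot{q}_{1},r_{1})$.

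The main obstacle is verifying the $\kappa^{+}$-closure of $\mathbb{U}^{*}$ inside $V[G_{\MRB}]$. The key idea is, via Corollary 4.3, to reduce any $\kappa$-sequence in $\mathbb{U}^{*}$ to a sequence in $V[G_{\MPB\ast\dot{\MQB}}]$, and then argue as in Lemma 4.2(1). More precisely, given a decreasing sequence $\langle(1,1,r_{\xi}):\xi<\kappa\rangle\subseteq\mathbb{U}^{*}$ in $V[G_{\MRB}]$, fix a $V$-enumeration of the relevant pool of $M$-names; the sequence is then coded by a $\kappa$-sequence of ordinals in $V[G_{\MRB}]$, which by Corollary 4.3 lies in $V[G_{\MPB\ast\dot{\MQB}}]$. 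Working in $V[G_{\MPB\ast\dot{\MQB}}]$, I set $\dom(r):=\bigcup_{\xi<\kappa}\dom(r_{\xi})$, and for each $\alpha\in\dom(r)$ use maximal completeness in the $\kappa^{+}$-closed forcing $Add(\kappa^{+},1)$ to produce a name $\dot{\tau}_{\alpha}$ forced by the trivial condition to be a lower bound of $\langle r_{\xi}(\alpha):\xi\geq\xi_{\alpha}\rangle$. The resulting $(1,1,r)$ with $r(\alpha)=\dot{\tau}_{\alpha}$ lies in $j(\mathbb{U})^{M}$ below each $(1,1,r_{\xi})$, and since compatibility with $G_{\MRB}$ is preserved under strengthening, $(1,1,r)\in\mathbb{U}^{*}$, as required.
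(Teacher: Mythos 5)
Your construction matches the paper's almost verbatim: the same $\mathbb{U}^*$ (conditions of $j(\mathbb{R})$ with trivial first two coordinates lying in the quotient), the same map $\pi(\langle (p,\dot q),(1,1,r)\rangle)=(p,\dot q,r)$, and the same verification of the projection property via maximal completeness, modelled on Lemma 4.2(2). The genuine gap is at the final step of the closure argument. You take each $\dot\tau_\alpha$ to be merely \emph{a} lower bound of $\langle r_\xi(\alpha):\xi\geq\xi_\alpha\rangle$ and then conclude that $(1,1,r)\in\mathbb{U}^*$ ``since compatibility with $G_{\MRB}$ is preserved under strengthening.'' That implication is backwards. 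Membership in the quotient means $\varrho(1,1,r)\in G_{\MRB}$, and this is preserved under \emph{weakening}, not strengthening: since $(1,1,r)$ lies below every $(1,1,r_\xi)$, its projection $(1,1,r^*)$ lies below every $(1,1,r^*_\xi)\in G_{\MRB}$, and a filter is upward closed, not downward. The same failure afflicts your alternative formulation: a condition compatible with every element of $G_{\MRB}$ can acquire incompatibilities when strengthened. Concretely, an arbitrary choice of lower-bound names may make $r^*(\xi)$ a proper strengthening of all the $r^*_\alpha(\xi)$ carrying new commitments about the $Add(\kappa^{+},1)$-condition that the generic filter simply avoids, in which case $(1,1,r)$ is a lower bound in $j(\mathbb{R})$ but \emph{not} an element of $\mathbb{U}^*$, and the closure argument collapses.

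The paper closes exactly this gap by an extra idea you are missing: it chooses $\dot\tau_\xi$ to be the \emph{greatest} lower bound, i.e.\ a name forced by $1_{j(\mathbb{P})\upharpoonright\xi\ast\dot{\mathbb{Q}}^{M}_{j(\mathcal{U})_\xi}}$ to equal the Boolean infimum $\bigwedge_{\alpha\geq\alpha_\xi}r_\alpha(\xi)$ (available since $Add(\kappa^{+},1)$ is forced to be $\kappa^{+}$-closed). Then the projected condition satisfies $1\Vdash r^*(\xi)=\bigwedge_{\alpha\geq\alpha_\xi}r^*_\alpha(\xi)$, so $(1,1,r^*)$ is the infimum of the decreasing sequence $\langle (1,1,r^*_\alpha):\alpha<\kappa\rangle\subseteq G_{\MRB}$; and a generic filter contains every existing infimum of a decreasing sequence of its elements, by the density of the set of conditions that are either below the infimum or incompatible with some member of the sequence. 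This is the one point where ``a lower bound'' and ``the greatest lower bound'' genuinely differ, and your proof needs the latter. Your additional detour through Corollary 4.3, placing the coded sequence in $V[G_{\MPB\ast\dot{\MQB}}]$, is harmless and even helpful for seeing that the amalgamated condition is a legitimate object, but it is orthogonal to, and no substitute for, the infimum argument.
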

\begin{proof}
Let $\mathbb{U}^{*}=\{(p,q,r)\in j(\mathbb{R})/G_{\MRB}:p=q=1\}$, and define
\begin{center}
$\pi:((j(\mathbb{P})\ast\dot{\mathbb{Q}}^{M}_{j(\mathcal{U})})/G_{\mathbb{P}\ast\dot{\mathbb{Q}}})\times\mathbb{U}^{*}\longrightarrow j(\mathbb{R})/G_{\mathbb{R}}$
 \end{center}
 by $\pi(\langle (p,q),(0,0,r)\rangle)=(p,q,r)$.
 We show that  $\mathbb{U}^*$ and $\pi$ are as required.

Let us show that $\mathbb{U}^*$ is $\kappa^+$-closed; thus assume $\langle  (p_\alpha, q_\alpha, r_\alpha): \alpha < \kappa         \rangle$
 is a decreasing sequence of conditions in $\mathbb{U}^{*}$, then for all $\alpha < \kappa, p_\alpha=q_\alpha = 1$ and
 $(p_\alpha, q_\alpha, r_\alpha) \in j(\mathbb{R})/G_{\MRB}.$ Thus we have the following:
 \begin{enumerate}
\item If $\alpha < \kappa$, then  $(1, 1, r_\alpha) \in j(\mathbb{R})$.
\item If $\alpha < \kappa,$ then $\varrho(1, 1, r_\alpha) =(1, 1, r^*_\alpha) \in G_{\MRB},$
where $r^*_\alpha$ is defined as follows:
\begin{enumerate}
\item $r^*_\alpha$ is a parial function with $\dom(r^*_\alpha)=\dom(r) \cap \Lambda \subseteq \Lambda$.
\item For every $\xi\in \dom(r^*_\alpha)$, $~r^*_\alpha(\xi)=r_\alpha(\xi)$ is a $\mathbb{P}\upharpoonright\xi\ast\dot{\mathbb{Q}}_{\mathcal{U}_\xi}$-name for a condition in $\mathbb{R}_{\xi}=Add(\kappa^{+},1)_{V[G_{\mathbb{P} \upharpoonright \xi}][\mathrm{X}]}$.
\end{enumerate}
 \end{enumerate}
 Let $r$ be a function with $\dom(r)=\bigcup_{\xi<\kappa}\dom(r_\alpha)$. For $\xi \in \dom(r)$ we define $r(\xi)$ as follows.
 Let  $\alpha_{\xi}$ be such that $\xi\in \dom(r_{\alpha_{\xi}})$. Then $\xi\in \dom(r_{\alpha})$ for all $\alpha\geq\alpha_{\xi}$. We have $$1_{j(\mathbb{P})\upharpoonright\xi\ast\dot{\mathbb{Q}}^{M}_{j(\mathcal{U})_\xi}}\vdash ``\langle r_{\alpha}(\xi):\alpha\geq\alpha_{\xi}\rangle~is~a~decreasing~sequence~in~Add(\kappa^{+},1)".$$
 On the other hand $1_{j(\mathbb{P})\upharpoonright\xi\ast\dot{\mathbb{Q}}^{M}_{j(\mathcal{U})_\xi}}\vdash ``Add(\kappa^+,1)~is~\kappa^+\text{-}closed$ '', thus there exists (by maximal completeness) a name $\dot{\tau}_\xi$ such that $1_{j(\mathbb{P})\upharpoonright\xi\ast\dot{\mathbb{Q}}^{M}_{j(\mathcal{U})_\xi}}$ forces it to be the greatest lower bound of the above-mentioned sequence, i.e.
$$1_{j(\mathbb{P})\upharpoonright\xi\ast\dot{\mathbb{Q}}^{M}_{j(\mathcal{U})_\xi}}\Vdash \dot{\tau}_\xi=\bigwedge_{\alpha \geq \alpha_\xi}r_{\alpha}(\xi).$$
  Let $r(\xi)=\dot{\tau}_\xi$. It is evident that $(1, 1, r) \in j(\mathbb{R})$, and that it is a lower bound for $\{(1, 1, r_{\alpha}):\alpha<\kappa\}$. It remains to show that $(1, 1, r) \in \mathbb{U}^*$, or equivalently  $\varrho(1, 1, r) \in G_{\MRB}.$ Let
 $\varrho(1, 1, r)=(1,1, r^*).$ Then $\dom(r^*)=\dom(r) \cap \Lambda = \bigcup_{\xi<\kappa}(\dom(r_\alpha) \cap \Lambda) = \bigcup_{\xi<\kappa}\dom(r^*_\alpha)$, and for all $\xi \in \dom(r^*)$ we have
 $$1_{\mathbb{P}\upharpoonright\xi\ast\dot{\mathbb{Q}}_{\mathcal{U}_\xi}}\Vdash r^*(\xi)=\dot{\tau}_\xi=\bigwedge_{\alpha \geq \alpha_\xi}r^*_{\alpha}(\xi).$$
 Now we are done as clearly $(1,1, r^*) \in G_{\MRB}$.

It is also obvious that $\pi$ is a projection and the result follows.
\end{proof}
\begin{lemma}\label{QQ}
 $V^{\mathbb{R}\ast\dot{\mathbb{U}}^*}\models$``$(j(\mathbb{P})\ast\dot{\mathbb{Q}}^{M}_{j(\mathcal{U})}) / G_{\MPB \ast \dot{\MQB}}$ has the $\kappa^{+}$-Knaster property''.
\end{lemma}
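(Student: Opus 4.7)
The plan is to lift the proof of Lemma~\ref{a} to the ``upstairs'' forcing $j(\mathbb{P})\ast\dot{\mathbb{Q}}^{M}_{j(\mathcal{U})}$, pass through the quotient by $G_{\mathbb{P}\ast\dot{\mathbb{Q}}}$, and then transport the Knaster property to $V^{\mathbb{R}\ast\dot{\mathbb{U}}^{*}}$ using the factorization of Lemma~\ref{b} together with the $\kappa^{+}$-closure of $\mathbb{U}^{*}$ established in the preceding lemma. First I would rerun the proof of Lemma~\ref{a} verbatim in $V$, with $j(\mathbb{P})$ and $j(\mathcal{U})$ in place of $\mathbb{P}$ and $\mathcal{U}$: the Magidor stems of $\dot{\mathbb{Q}}^{M}_{j(\mathcal{U})}$-conditions are coded inside $V_{\kappa}^{V}$, a set of size $\kappa$, and $j(\mathbb{P})=Add(\kappa,j(\lambda))$ has the $\kappa^{+}$-Knaster property in $V$ because $\kappa^{<\kappa}=\kappa$ there. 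Hence $j(\mathbb{P})\ast\dot{\mathbb{Q}}^{M}_{j(\mathcal{U})}$ is $\kappa^{+}$-Knaster in $V$.

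Next I would show that the quotient $(j(\mathbb{P})\ast\dot{\mathbb{Q}}^{M}_{j(\mathcal{U})})/G_{\mathbb{P}\ast\dot{\mathbb{Q}}}$ is $\kappa^{+}$-Knaster in $V^{\mathbb{P}\ast\dot{\mathbb{Q}}}$ via a nice-name reduction. Given a sequence $\{\dot c_{\alpha}:\alpha<\kappa^{+}\}$ of quotient conditions, for each $\alpha$ choose $a_{\alpha}\in G_{\mathbb{P}\ast\dot{\mathbb{Q}}}$ and $r_{\alpha}\in j(\mathbb{P})\ast\dot{\mathbb{Q}}^{M}_{j(\mathcal{U})}$ with $a_{\alpha}\Vdash \dot c_{\alpha}=\check r_{\alpha}$. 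The $\kappa^{+}$-Knaster property of the product $(\mathbb{P}\ast\dot{\mathbb{Q}})\times(j(\mathbb{P})\ast\dot{\mathbb{Q}}^{M}_{j(\mathcal{U})})$ in $V$ (Knaster is productive) then yields an unbounded $I\in V$ along which the pairs $(a_{\alpha},r_{\alpha})$ are pairwise compatible; compatibility in the second coordinate transfers to compatibility of the $\dot c_{\alpha}$ in the quotient.

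Finally I would transfer to $V^{\mathbb{R}\ast\dot{\mathbb{U}}^{*}}$. By Lemma~\ref{b} and the $\kappa^{+}$-closure of $\mathbb{U}^{*}$, we have $V^{\mathbb{R}\ast\dot{\mathbb{U}}^{*}}\subseteq V^{(\mathbb{P}\ast\dot{\mathbb{Q}})\times(\mathbb{U}\ast\dot{\mathbb{U}}^{*})}$, with $\mathbb{U}\ast\dot{\mathbb{U}}^{*}$ a $\kappa^{+}$-closed forcing over $V^{\mathbb{P}\ast\dot{\mathbb{Q}}}$. The standard preservation theorem---that $\kappa^{+}$-closed forcing preserves $\kappa^{+}$-Knaster---then gives Knaster of the quotient in the ambient extension. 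The main obstacle is the \emph{descent}: $\kappa^{+}$-Knaster is not automatically downward absolute, so a compatible subfamily supplied by the ambient extension need not lie in $V^{\mathbb{R}\ast\dot{\mathbb{U}}^{*}}$. The fix is to carry out the preservation step directly over $\mathbb{R}\ast\dot{\mathbb{U}}^{*}/G_{\mathbb{P}\ast\dot{\mathbb{Q}}}$ (rather than over the full $\mathbb{U}\ast\dot{\mathbb{U}}^{*}$), exploiting $\kappa^{+}$-cc of $\mathbb{P}\ast\dot{\mathbb{Q}}$ together with the $\kappa^{+}$-distributivity of the remaining projection factor to stabilize the pairwise-compatibility witness as an $\mathbb{R}\ast\dot{\mathbb{U}}^{*}$-name.
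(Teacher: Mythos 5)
There is a genuine gap, in two places, and they are exactly the points where the paper has to do real work. Your first step (rerunning Lemma~\ref{a} for $j(\mathbb{P})\ast\dot{\mathbb{Q}}^{M}_{j(\mathcal{U})}$ in $V$) is fine, but the reduction in your second step is invalid as written: the sequence $\langle (a_{\alpha},r_{\alpha}):\alpha<\kappa^{+}\rangle$ is chosen in $V[G_{\mathbb{P}\ast\dot{\mathbb{Q}}}]$ (indeed $a_{\alpha}\in G_{\mathbb{P}\ast\dot{\mathbb{Q}}}$), so it need not lie in $V$, and the $\kappa^{+}$-Knaster property of a poset \emph{in $V$} says nothing about $\kappa^{+}$-sequences of its conditions that exist only in an extension; in particular no unbounded $I\in V$ is available. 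The same problem recurs, unrepaired, in your third step: the model $V^{\mathbb{R}\ast\dot{\mathbb{U}}^{*}}$ adds new $\kappa^{+}$-sequences of conditions over $V^{\mathbb{P}\ast\dot{\mathbb{Q}}}$ (the factors $\mathbb{U}$ and $\mathbb{U}^{*}$ are only $\kappa^{+}$-closed, not $\kappa^{+}$-c.c.), and the ``standard preservation theorem'' you invoke --- that $\kappa^{+}$-closed forcing preserves $\kappa^{+}$-Knaster --- is not a theorem one can cite; its absence is precisely why the paper imports Lemma 5.5, the Cummings--Foreman covering $\Delta$-system lemma. The paper's proof works by density directly in $V^{\mathbb{R}\ast\dot{\mathbb{U}}^{*}}$: a condition forces $\langle(p_{\alpha},\dot q_{\alpha}):\alpha<\kappa^{+}\rangle$ to witness failure of Knasterness; each Cohen part $p_{\alpha}$ is a $<\kappa$-sized set belonging to $V$, and Lemma 5.5 (whose hypotheses --- $\kappa^{<\kappa}=\kappa$ in $V$, correct computation of $\kappa^{+}$, and $\kappa$-covering, the latter coming from Corollary 4.3/Easton and the $\kappa^{+}$-c.c.\ of $\mathbb{P}\ast\dot{\mathbb{Q}}$ --- must be verified for the pair $V\subseteq V^{\mathbb{R}\ast\dot{\mathbb{U}}^{*}}$) extracts an unbounded subfamily with pairwise compatible Cohen parts even though the sequence is not in $V$; the Magidor parts are then handled by stabilizing the stems $\dot d_{\alpha}$ on a further unbounded set. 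Your closing ``fix'' gestures at $\kappa^{+}$-c.c.\ plus distributivity but never produces this combinatorial lemma, and without it the transfer step is unproved.

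The second gap is your claim that ``compatibility in the second coordinate transfers to compatibility of the $\dot c_{\alpha}$ in the quotient.'' Two conditions of $j(\mathbb{P})\ast\dot{\mathbb{Q}}^{M}_{j(\mathcal{U})}$ that both lie in $(j(\mathbb{P})\ast\dot{\mathbb{Q}}^{M}_{j(\mathcal{U})})/G_{\mathbb{P}\ast\dot{\mathbb{Q}}}$ and are compatible in the full poset need not have a common extension \emph{lying in the quotient}: quotient membership constrains how the new Magidor points and measure-one sets interleave with the generic object below $\lambda$. This is exactly the content of the paper's Claim 5.4, which characterizes (via clauses (1)--(5)) when $p^{*}$ forces $q^{*}$ out of the quotient; the proof of Lemma~\ref{QQ} then uses that characterization to check that the explicit amalgam $(p_{\alpha}\cup p_{\beta},\ \dot d^{\frown}\langle\kappa,\dot A_{\alpha}\cap\dot A_{\beta}\rangle)$ cannot be forced out of the quotient by the chosen condition, contradicting the assumed pairwise incompatibility. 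Your proposal contains no analogue of Claim 5.4, so even granting the transfer steps, the heart of the argument --- certifying that amalgams of quotient conditions remain in the quotient --- is missing.
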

To prove the lemma, we need a finer analysis of the  quotient forcing $(j(\mathbb{P})\ast\dot{\mathbb{Q}}^{M}_{j(\mathcal{U})})/ G_{\mathbb{P}\ast\dot{\mathbb{Q}}}$.
\begin{claim}
Assume  $p^{*}=\langle p, \langle (\alpha_1,\dot{A}_1),\dots,(\alpha_{n-1},\dot{A}_{n-1}),(\alpha_n=\kappa,\dot{A})\rangle\rangle\in\mathbb{P}\ast\dot{\mathbb{Q}}$ and $q^{*}=\langle q,\langle(\beta_1,\dot{B}_1),\dots,(\beta_{m-1},\dot{B}_{m-1}),(\beta_m=\kappa,\dot{B})\rangle\rangle\in j(\mathbb{P})\ast\dot{\mathbb{Q}}^{M}_{j(\mathcal{U})}$. Then
\[
p^{*}\vdash_{\MPB \ast \dot{\MQB}} \text{~``~} q^{*}\notin (j(\mathbb{P})\ast\dot{\mathbb{Q}}^{M}_{j(\mathcal{U})})/G_{\mathbb{P}\ast\dot{\mathbb{Q}}} \text{~''}
\]
if and only if one of the following hold:
\begin{enumerate}
\item
$p\perp q\upharpoonright\lambda.$
\item
$p\parallel q\upharpoonright\lambda$ and there exists $j$ such that $\beta_j\notin\{\alpha_0,\dots,\alpha_n\},$ and $p\cup q\vdash_{j(\MPB)}$``$\beta_j\notin \dot{A}_{k}$'', where $k$ is the least index such that $\beta_j<\alpha_k$.
\item
$p\parallel q\upharpoonright\lambda$ and there exists $j$ such that $\beta_j\notin\{\alpha_0,\dots,\alpha_n\},$ and $p\cup q\vdash_{j(\MPB)}$``$ \dot{B}_j\nsubseteq \dot{A}_{k}\cap\beta_j$'',  where $k$ is the least index such that $\beta_j<\alpha_k$.
\item
$p\parallel q\upharpoonright\lambda$ and there exists $i$ such that $\alpha_i\notin\{\beta_0,\dots,\beta_m\},$ and $p\cup q\vdash_{j(\MPB)}$``$\alpha_i\notin \dot{B}_{k}$'',  where $k$ is the least index such that $\alpha_i<\beta_k$.

\item
$p\parallel q\upharpoonright\lambda$ and there exists $i$ such that $\alpha_i\notin\{\beta_0,\dots,\beta_m\},$ and $p\cup q\vdash_{j(\MPB)}$``$ \dot{A}_i\nsubseteq\dot{B}_{k}\cap\alpha_i$'', where $k$ is the least index such that $\alpha_i<\beta_k$.
\end{enumerate}
\end{claim}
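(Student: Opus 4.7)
The plan is to translate the forcing statement ``$p^{*}\Vdash q^{*}\notin$ quotient'' into a compatibility question inside the larger two-step iteration $j(\mathbb{P})\ast\dot{\mathbb{Q}}^{M}_{j(\mathcal{U})}$. First, note that $\mathbb{P}\ast\dot{\mathbb{Q}}$ embeds completely into $j(\mathbb{P})\ast\dot{\mathbb{Q}}^{M}_{j(\mathcal{U})}$: the $\mathbb{P}$-factor embeds via inclusion (using $j(\mathbb{P})\cong\mathbb{P}\times Add(\kappa,[\lambda,j(\lambda)))$), and the Magidor factor via the fact, established by the Characterization Theorem and Lemma 3.3, that the two Magidor forcings share the same generic sequences. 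Under this setup, $p^{*}\Vdash_{\mathbb{P}\ast\dot{\mathbb{Q}}}$ ``$q^{*}\notin(j(\mathbb{P})\ast\dot{\mathbb{Q}}^{M}_{j(\mathcal{U})})/G_{\mathbb{P}\ast\dot{\mathbb{Q}}}$'' if and only if no extension of $p^{*}$ is compatible with $q^{*}$ in the larger forcing. Decomposing this via the iteration, with $p^{*}=(p,\dot{s})$ and $q^{*}=(q,\dot{t})$, there is no common extension iff either (a) $p$ and $q$ have no common extension in $j(\mathbb{P})=Add(\kappa,j(\lambda))$, or (b) they admit a common refinement that forces $\dot{s}$ and $\dot{t}$ incompatible in $\mathbb{Q}^{M}_{j(\mathcal{U})}$. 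Since $\mathbb{P}$ is Cohen-like and $\mathrm{supp}(p)\subseteq\lambda$, option (a) is precisely $p\perp q\upharpoonright\lambda$, which is case 1; when (a) fails, $p\cup q$ is the canonical common refinement in $j(\mathbb{P})$, so (b) becomes a decision by $p\cup q$ in $j(\mathbb{P})$ about the Magidor merger of $\dot{s}$ and $\dot{t}$.

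Applying the compatibility criterion in Definition 2.3, $\dot{s}$ and $\dot{t}$ are compatible in $\mathbb{Q}^{M}_{j(\mathcal{U})}$ precisely when their interleaved sequence is a legal condition refining both, which requires, for each extra $\beta_{j}\notin\{\alpha_{0},\ldots,\alpha_{n}\}$, that $\beta_{j}\in\dot{A}_{k}$ and $\dot{B}_{j}\subseteq\dot{A}_{k}\cap\beta_{j}$ (where $k$ is least with $\beta_{j}<\alpha_{k}$), and symmetrically for each extra $\alpha_{i}\notin\{\beta_{0},\ldots,\beta_{m}\}$. A forced negation of any one of these four types of clauses by $p\cup q$ in $j(\mathbb{P})$ is exactly one of cases 2--5. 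For the easy direction of the biconditional, each of cases 1--5 directly obstructs any common extension of $p^{*}$ and $q^{*}$ in the larger forcing, so $p^{*}$ indeed forces $q^{*}$ out of the quotient.

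The forward direction is the main step. Assume none of 1--5 holds. Since case 1 fails, $r_{0}:=p\cup q$ is a valid condition in $j(\mathbb{P})$. Since cases 2--5 fail and there are only finitely many ``extra'' coordinates on either side, each of the finitely many compatibility clauses (``$\beta_{j}\in\dot{A}_{k}$'', ``$\dot{B}_{j}\subseteq\dot{A}_{k}\cap\beta_{j}$'', ``$\alpha_{i}\in\dot{B}_{k}$'', ``$\dot{A}_{i}\subseteq\dot{B}_{k}\cap\alpha_{i}$'') admits at least one extension of $r_{0}$ forcing it. By running through the finite list of clauses and successively strengthening, we obtain a single $r\leq r_{0}$ in $j(\mathbb{P})$ that forces all of them simultaneously; from such an $r$, the merged Magidor sequence with intersected measure-one sets gives a common extension $\dot{u}$ of $\dot{s}$ and $\dot{t}$, so that $(r,\dot{u})$ witnesses compatibility of $p^{*}$ and $q^{*}$ in the ambient forcing, contradicting the hypothesis. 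The chief subtlety throughout is careful bookkeeping of which forcing decides which name: $\dot{A}_{k}$ is originally a $\mathbb{P}$-name but is treated here as a $j(\mathbb{P})$-name via the complete embedding, and the finiteness of the Magidor stems is what allows the inductive strengthening in the forward direction to go through without any closure or chain-condition argument.
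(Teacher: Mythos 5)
Your opening reduction---translating $p^{*}\Vdash_{\mathbb{P}\ast\dot{\mathbb{Q}}}$ ``$q^{*}\notin (j(\mathbb{P})\ast\dot{\mathbb{Q}}^{M}_{j(\mathcal{U})})/G_{\mathbb{P}\ast\dot{\mathbb{Q}}}$'' into incompatibility of $p^{*}$ and $q^{*}$ in the larger iteration---and your easy direction both match the paper's proof, which phrases the same content via the projection to $\mathbb{P}\ast\dot{\mathbb{Q}}$ and generic filters. (One small slip there: in a two-step iteration, incompatibility of $(p,\dot{s})$ and $(q,\dot{t})$ means \emph{every} common refinement of $p$ and $q$ forces $\dot{s}\perp\dot{t}$, equivalently the weakest one $p\cup q$ forces it; your phrase ``admit a common refinement that forces them incompatible'' states the wrong quantifier, though you later use the correct form.) The genuine gap is in your forward direction. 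From the failure of clauses (2)--(5) you obtain, for each positive statement $\varphi$ separately (e.g.\ ``$\beta_j\in\dot{A}_k$''), \emph{some} extension of $p\cup q$ forcing $\varphi$, since $p\cup q\not\Vdash\neg\varphi$. But ``running through the finite list and successively strengthening'' does not produce a single $r\leq p\cup q$ forcing all of them: once you pass to $r_1\Vdash\varphi_1$, the hypothesis tells you nothing about $r_1$, which may perfectly well force $\neg\varphi_2$ even though $p\cup q$ does not. Forcing prescribed truth values is not the same as deciding statements, and finiteness of the list does not rescue the induction. Concretely, $\dot{A}_k$ can be a $\mathbb{P}$-name rigged so that the generic decides $\beta_{j_1}\in\dot{A}_k$ exactly when $\beta_{j_2}\notin\dot{A}_k$ (deleting a single point keeps the set in $\mathcal{F}(\alpha_k)$); then neither instance of the relevant clause is forced by $p\cup q$, yet no extension forces both memberships, and your argument halts at the second strengthening.

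The paper's proof is structured precisely so as to avoid this step: it never strengthens $p\cup q$ at all. It directly defines the merged condition $r^{*}=\langle p\cup q,\langle(\gamma_1,\dot{C}_1),\dots,(\gamma_t,\dot{C}_t)\rangle\rangle$, where the $\dot{C}_k$ are names forced by $p\cup q$ itself to be the appropriate intersections $\dot{A}_i\cap\dot{B}_j$, $\dot{A}_i\cap(\dot{B}_j\cap\alpha_i)$, or $(\dot{A}_i\cap\beta_j)\cap\dot{B}_j$. The point you miss is that in Definition 2.3 the inclusion requirements on measure-one sets can always be met by \emph{shrinking}, so intersecting the names satisfies them by construction; this is also why your statement of the compatibility criterion is off---compatibility of two Magidor conditions does not require $\dot{B}_j\subseteq\dot{A}_k\cap\beta_j$ (that is a clause of the \emph{ordering} $q\leq p$), only that the common extension's sets, i.e.\ the intersections, be legal. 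The failure of (1)--(5) is then invoked only to verify that $r^{*}$ is a well-defined condition extending both $p^{*}$ and $q^{*}$, after which a generic through $r^{*}$ projecting onto $G_{\mathbb{P}\ast\dot{\mathbb{Q}}}$ completes the contrapositive. To repair your write-up, replace the simultaneous-forcing search with this single merged condition over $p\cup q$; as it stands, the key step of your forward direction does not go through.
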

\begin{proof}
If one of the clauses (1) to (5) holds, then it is clear that
 \[
p^{*}\vdash_{\MPB \ast \dot{\MQB}} \text{~``~} q^{*}\notin (j(\mathbb{P})\ast\dot{\mathbb{Q}}^{M}_{j(\mathcal{U})})/G_{\mathbb{P}\ast\dot{\mathbb{Q}}} \text{~''}.
\]
This is because if any of the cases (1) - (5) holds, then we cannot simultaneously have  $p^{*} \in G_{\mathbb{P}\ast\dot{\mathbb{Q}}}$ and $\pi(q^{*})\in G_{\mathbb{P}\ast\dot{\mathbb{Q}}}$.

On the other hand assume all conditions (1)-(5) fail. Then $p$ is compatible with $q\upharpoonright\lambda$, and so $p \cup q \in j(\MPB)$ is well-defined.
 Set
 \begin{center}
 $p^*(1)=\langle (\alpha_1,\dot{A}_1),\dots,(\alpha_{n-1},\dot{A}_{n-1}),(\alpha_n=\kappa,\dot{A})\rangle$
  \end{center}
 and
  \begin{center}
  $q^*(1)=\langle(\beta_1,\dot{B}_1),\dots,(\beta_{m-1},\dot{B}_{m-1}),(\beta_m=\kappa,\dot{B})\rangle$.
 \end{center}
 We show that $p \cup q \Vdash$``$p^*(1)$ and $q^*(1)$ are compatible''.

 Let $\{\gamma_1, \dots, \gamma_t\}$ be an increasing enumeration of $\{\alpha_o,\dots, \alpha_n\}\cup\{\beta_o,\dots, \beta_m\}$. Suppose $k \leq t$. We would like to define a name
 $\dot{C}_k.$ There are several cases to consider:
  \begin{enumerate}
   \item There exist $i$ and $j$ such that $\alpha_i=\gamma_k=\beta_j$, then let $\dot{C}_k$ be a name forced by $p\cup q$ to be $\dot{A}_i\cap \dot{B}_j$.
   \item There exists $i$ such that $\gamma_k=\alpha_i\notin\{\beta_1,\dots, \beta_m\}$. Let $j$ be the least index such that $\gamma_k < \beta_j$
   and let $\dot{C}_k$ be a name forced by $p\cup q$ to be $\dot{A}_i\cap (\dot{B}_j \cap \alpha_i)$.
  \item  There exists $j$ such that $\gamma_k=\beta_j\notin\{\alpha_0,\dots, \alpha_n\}$. Let $i$ be the least index such that $\gamma_k < \alpha_i$
   and let $\dot{C}_k$ be a name forced by $p\cup q$ to be $(\dot{A}_i \cap \beta_j) \cap \dot{B}_j$.
\end{enumerate}
Let
\begin{center}
$r^*=\langle p\cup q,  \langle(\gamma_1,\dot{C}_1), \dots, (\gamma_t=\kappa, \dot{C}_t)\rangle \rangle$.
\end{center}
As none of the clauses (1)-(5) hold, we can easily check that $r^* \in j(\mathbb{P})\ast\dot{\mathbb{Q}}^{M}_{j(\mathcal{U})}$ is a well-defined condition, and it extends both $p^*$ and $q^*.$ Let $G_{j(\mathbb{P})\ast\dot{\mathbb{Q}}^{M}_{j(\mathcal{U})}}$ be a $j(\mathbb{P})\ast\dot{\mathbb{Q}}^{M}_{j(\mathcal{U})}$-generic filter containing $r^*$, and such that its projection to $\MPB \ast \dot{\MQB}$ gives $G_{\MPB \ast \dot{\MQB}}$.
Then
\[
p^{*}\vdash_{\MPB \ast \dot{\MQB}} \text{~``~} r^{*}\in (j(\mathbb{P})\ast\dot{\mathbb{Q}}^{M}_{j(\mathcal{U})})/G_{\mathbb{P}\ast\dot{\mathbb{Q}}} \text{~ and~}r^* \leq q^*\text{~''}.
\]
Thus
\[
p^{*}\vdash_{\MPB \ast \dot{\MQB}} \text{~``~} q^{*}\in (j(\mathbb{P})\ast\dot{\mathbb{Q}}^{M}_{j(\mathcal{U})})/G_{\mathbb{P}\ast\dot{\mathbb{Q}}} \text{~''}
\]
which gives the result.
\end{proof}
For the proof of  Lemma \ref{QQ}, we also need the following lemma from \cite{CF}.
\begin{lemma}
Let $V\subseteq W$ be two inner models of $ZFC$ and let $\kappa$ be a limit cardinal in $W$. Suppose that the following properties hold:
\begin{enumerate}
\item $V\models \kappa=\kappa^{<\kappa}$.
\item $W$ computes $\kappa^+$ correctly.
\item Every set of ordinals of size at most $\kappa$ in $W$ is covered by a set of size at most $\kappa$ in $V$ .
\end{enumerate}
Let $\langle x_{\alpha}:\alpha<\kappa^+\rangle$ be a $\kappa^+$-sequence of sets of ordinals such that $x_{\alpha}\in V$, and $|x_\alpha|<\kappa$ for all $\alpha<\kappa^+$. Then there exists $I\subseteq\kappa^+$ unbounded such that $\langle x_{\alpha}:\alpha\in I\rangle$ forms a $\Delta$-system.
\end{lemma}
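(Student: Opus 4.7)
The plan is to imitate the classical proof of the $\Delta$-system lemma --- which in a single model relies on $\kappa^{<\kappa}=\kappa$ --- by routing the crucial cardinal arithmetic through $V$ via the covering hypothesis (3), since $W$ is not assumed to satisfy good arithmetic at $\kappa$. First I would note that $\kappa^+$ is regular in $W$, using (2) and $V\models\ZFC$. Since $\kappa$ is a limit cardinal in $W$, pigeonhole on the $\leq\kappa$ possible cardinalities of the $x_\alpha$'s yields an unbounded $I_0\subseteq\kappa^+$ on which $|x_\alpha|$ is a fixed $\theta<\kappa$; thin to this and enumerate each $x_\alpha=\{x_\alpha(i):i<\theta\}$ in increasing order (this enumeration lives in $V$).

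Next I would build a $\kappa^+$-sequence $\langle\alpha_\beta:\beta<\kappa^+\rangle$ together with ``candidate roots'' $\tilde r_\beta\in V$ by recursion. At stage $\beta$, the accumulated union $U_\beta:=\bigcup_{\gamma<\beta}x_{\alpha_\gamma}$ is a set of ordinals in $W$ of size $\leq\kappa$, so by (3) it is covered by some $T_\beta\in V$ with $|T_\beta|^V\leq\kappa$. For any candidate $\alpha>\sup_{\gamma<\beta}\alpha_\gamma$, the set $x_\alpha\cap T_\beta$ lies in $V$ (as an intersection of two $V$-sets) and is a subset of $T_\beta$ of size $<\kappa$; by (1), the pool of such $V$-subsets has cardinality $\leq\kappa$. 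Regularity of $\kappa^+$ in $W$ lets me pick $\tilde r_\beta$ in this pool attained by $x_\alpha\cap T_\beta$ for $\kappa^+$-many $\alpha$, and set $\alpha_\beta$ to be one such. A final pigeonhole on $\langle\tilde r_\beta:\beta<\kappa^+\rangle$ extracts an unbounded $I\subseteq\kappa^+$ on which $\tilde r_\beta$ equals a fixed $r$. Then $\{x_{\alpha_\beta}:\beta\in I\}$ is a $\Delta$-system with root $r$: for $\gamma<\beta$ in $I$, $x_{\alpha_\gamma}\subseteq T_\beta$ gives $x_{\alpha_\gamma}\cap x_{\alpha_\beta}\subseteq x_{\alpha_\beta}\cap T_\beta=r$, while $r\subseteq x_{\alpha_\gamma}\cap x_{\alpha_\beta}$ holds because $r=\tilde r_\gamma\subseteq x_{\alpha_\gamma}$ and $r=\tilde r_\beta\subseteq x_{\alpha_\beta}$.

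The main obstacle is the final pigeonhole, which ostensibly requires only $\leq\kappa$ distinct values of $\tilde r_\beta$ to occur, and this is subtle because the covers $T_\beta$ depend on $\beta$. The argument is rescued by splitting on $|\bigcup_\alpha x_\alpha|^W$: when this is $\leq\kappa$ one can use a single $V$-cover $T$ throughout, so all $\tilde r_\beta$ lie in the fixed $\kappa$-sized pool $[T]^{<\kappa}\cap V$ and the global pigeonhole is immediate; when it is $\kappa^+$, one first extracts a strictly increasing sequence $\sigma_\alpha=\sup x_\alpha$ (possible by regularity of $\kappa^+$ in $W$, else the family is bounded and the previous subcase applies) and then chooses the covers $T_\beta$ coherently inside a common $V$-tower bounded level by level, so that the pigeonhole again reduces to the $V$-arithmetic (1).
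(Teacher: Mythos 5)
Your first two paragraphs set up the right skeleton: covering the accumulated union $U_\beta$ by $T_\beta\in V$, observing that $x_\alpha\cap T_\beta\in V$ lies in the pool $([T_\beta]^{<\kappa})^V$, which has size $\kappa$ by hypothesis (1), pigeonholing via the regularity of $\kappa^+$ in $W$, and correctly verifying that a constant root yields a $\Delta$-system. But the step you yourself flag is a genuine gap, and your proposed repair does not close it. First, your dichotomy is false: if the sups $\sigma_\alpha=\sup x_\alpha$ are bounded by some ordinal $\eta$, it does \emph{not} follow that $\bigl|\bigcup_\alpha x_\alpha\bigr|\leq\kappa$ --- the $x_\alpha$ can be $\kappa^+$-many pairwise distinct subsets of a fixed ordinal $\eta$ of cardinality far above $\kappa$, so the single-cover subcase does not apply; this bounded-sup, large-union configuration is the essential case of the lemma, not a degenerate one. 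Second, in the strictly-increasing-sup branch, ``choosing the covers $T_\beta$ coherently inside a common $V$-tower bounded level by level'' is not a construction: the tower $\langle T_\beta:\beta<\kappa^+\rangle$ is built in $W$, covering is only available for sets of size at most $\kappa$, so no $V$-object of size $\kappa$ can absorb $\kappa^+$-many levels, and the pools $([T_\beta]^{<\kappa})^V$ genuinely vary with $\beta$; your recursion therefore still produces $\kappa^+$-many candidate roots $\tilde r_\beta$ with no fixed $\kappa$-sized pool to pigeonhole them into. This is exactly the obstacle you named, restated rather than resolved (and increasing sups give no control anyway, since the intersections $x_\alpha\cap x_\beta$ live below the sups).

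It is worth understanding why no routine patch works: in the intended application $\kappa$ is \emph{singular} in $W$ (of cofinality $\delta$), so every ordinal below $\kappa^+$ has $W$-cofinality $<\kappa$ while the $x_\alpha$ may have size arbitrarily close to $\kappa$; hence the classical stabilization device of the one-model proof --- pressing down at points of cofinality $\kappa$, where $x_\alpha\cap\alpha$ is bounded --- is simply unavailable in $W$. A correct argument must stabilize the roots by exploiting that the traces $x_\alpha\cap T$ are \emph{elements of} $V$, where $\kappa$ is regular (note that (1) forces regularity of $\kappa$ in $V$), and this idea is absent from your sketch. Be aware also that the paper itself gives no proof of this statement: it quotes the lemma from Cummings--Foreman \cite{CF}, and you should compare your attempt with the argument given there. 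As it stands, your proposal establishes the lemma only in the easy case $\bigl|\bigcup_\alpha x_\alpha\bigr|\leq\kappa$, where in fact the family is constant on an unbounded set.
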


\begin{proof}[Proof of Lemma \ref{QQ}]\
Assume that the lemma fails. Thus let $((p, \dot{q}, r), \dot{u})\in \mathbb{R}\ast\dot{\mathbb{U}}^*$ be a condition that forces ``the sequence $\langle \dot{w}_{\alpha}:~\alpha<\kappa^+\rangle$  witnesses that   $(j(\mathbb{P})\ast\dot{\mathbb{Q}}^{M}_{j(\mathcal{U})})/G_{\mathbb{P}\ast\dot{\mathbb{Q}}}$ is not
$\kappa^+$-Knaster, where $\dot{w}_\alpha$ is of the form $(p_\alpha,\dot{q}_\alpha)$''. By Lemma 5.5, there exists $I_1\subseteq\kappa^+$
 of size $\kappa^+$ such that $((p, \dot{q}, r), \dot{u})$ forces ``$\langle p_\alpha:\alpha\in I_1 \rangle$ consists of  pairwise compatible
 conditions''.

Assume $\dot{q}_\alpha$ is forced to be $\dot{d}_{\alpha}^{‎\frown}‎\langle\langle\kappa, \dot{A}_\alpha\rangle\rangle$, where $\dot{d}_{\alpha}=\langle\langle\gamma^\alpha_0, \dot{A}^\alpha_0\rangle, \dots,\langle\gamma^\alpha_{m_\alpha-1}, \dot{A}^\alpha_{m_\alpha-1}\rangle\rangle$. Let $I_2 \subseteq I_1$
be of size $\kappa^+$ such that  $p_\alpha\vdash \dot{d}_\alpha=\dot{d}$, for some fixed $\dot{d} \in V_\kappa$, and all $\alpha \in I_2$. Therefore, for every $\alpha \in I_2$, $p_\alpha \Vdash$``$\dot{q}_\alpha=\langle\dot{d},\langle\kappa, \dot{A}_\alpha\rangle\rangle$''. Let $\dot{q}=\dot{d}_q^\frown\langle\langle\kappa, \dot{B}\rangle\rangle$. By strengthening $((p, \dot{q}, r), u)$, if necessary, we can assume that $p$  decides $\dot{d}$ which is forced to be the lower part of each $\dot{q}_\alpha$ such that $\alpha \in I_2$.

Let $G_{\MRB}\ast G_{\mathbb{U}^*}$ be an $\mathbb{R}\ast\mathbb{U}^*$-generic filter containing $((p,\dot{q},r), u)$. Then $(p,\dot{q},r)\in G_{\MRB}$ and hence $(p,\dot{q})\in G_{\mathbb{P}\ast\dot{\mathbb{Q}}}$. Similarly $(p_\alpha\upharpoonright\lambda,\dot{q_\alpha})\in G_{\mathbb{P}\ast\dot{\mathbb{Q}}}$. Thus
there exists $(p_\alpha^*, \dot{q}_{\alpha}^*)$ in $G_{\mathbb{P}\ast\dot{\mathbb{Q}}}$ extending both $(p,\dot{q})$ and
$(p_\alpha\upharpoonright\lambda,\dot{q_\alpha})$. As before, write $\dot{q}^{*}_{\alpha}$ as $\dot{d}^*_\alpha$$^\frown\langle\langle\kappa,\dot{B}_\alpha\rangle\rangle$. Now there exists an unbounded subset $I_3\subseteq I_2$
 and some $\dot{d}^* \in V_\kappa$ such that for all $\alpha \in I_3,$ $p_\alpha^*\vdash \dot{d^*_\alpha}=d^*$. Now consider $((p^*,\dot{q}^*,r),u)$, where $q^*=d^{*}$$^\frown\langle\langle\kappa,\dot{B}\rangle\rangle$.

Let $\alpha, \beta\in I_2$. Then by the construction we have
 $$(p^*,\dot{q}^*)\vdash``(p_\alpha,\dot{q}_\alpha)\in (j(\mathbb{P})\ast\dot{\mathbb{Q}}^{M}_{j(\mathcal{U})})/(\mathbb{P}\ast\dot{\mathbb{Q}})\text{''}$$
  and
   $$(p^*,\dot{q}^*)\vdash``(p_\beta,\dot{q}_\beta)\in (j(\mathbb{P})\ast\dot{\mathbb{Q}}^{M}_{j(\mathcal{U})})/(\mathbb{P}\ast\dot{\mathbb{Q}})\text{''}.$$
  Claim 5.4 now ensures us that
 $$(p^*,\dot{q}^*)\vdash (p_\alpha\cup p_\beta, \dot{d}^\frown \langle\kappa, \dot{A}_\alpha\cap\dot{A}_\beta\rangle)\notin (j(\mathbb{P})\ast\dot{\mathbb{Q}}^{M}_{j(\mathcal{U})})/(\mathbb{P}\ast\dot{\mathbb{Q}}).$$
 But this contradicts our construction.
\end{proof}
\begin{lemma}
$b\in V[G_{\mathbb{R}}]$.
\end{lemma}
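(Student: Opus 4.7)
The plan is to show that no new cofinal branch is added to $T$ by the quotient forcing $j(\mathbb{R})/G_{\mathbb{R}}$; since $b$ is a cofinal branch of $T$ lying in $V[G_{j(\mathbb{R})}]$, this will force $b \in V[G_{\mathbb{R}}]$.

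By Lemma 5.2, $j(\mathbb{R})/G_{\mathbb{R}}$ is a projection of the product
\[
\mathbb{S} \;:=\; \bigl((j(\mathbb{P}) \ast \dot{\mathbb{Q}}^{M}_{j(\mathcal{U})})/G_{\mathbb{P}\ast\dot{\mathbb{Q}}}\bigr) \times \mathbb{U}^{*},
\]
so it suffices to argue that $\mathbb{S}$ adds no new cofinal branch to $T$. Think of $\mathbb{S}$ as the two-step iteration $\mathbb{U}^{*} \ast \dot{\mathbb{K}}$: by Lemma 5.2 the first factor $\mathbb{U}^{*}$ is $\kappa^{+}$-closed in $V[G_{\mathbb{R}}]$, and by Lemma 5.3 the second factor $\mathbb{K}$ is $\kappa^{+}$-Knaster in $V[G_{\mathbb{R}}][G_{\mathbb{U}^{*}}]$. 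Moreover, since $T$ is a $\kappa^{++}$-tree in $V[G_{\mathbb{R}}]$, every level of $T$ has size at most $\kappa^{+}$, while $2^{\kappa^{+}} \geq 2^{\kappa} = \kappa^{++}$ holds in $V[G_{\mathbb{R}}]$.

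The main step is then to prove the following branch-preservation lemma: if $\mathbb{B}$ is $\kappa^{+}$-closed in $W$ and $\dot{\mathbb{A}}$ names a $\kappa^{+}$-Knaster forcing in $W^{\mathbb{B}}$, then $\mathbb{B} \ast \dot{\mathbb{A}}$ adds no new cofinal branch to any tree in $W$ of height $\kappa^{++}$ with levels of size $\leq \kappa^{+}$. I would prove this by contradiction: assuming $(u_{0}, \dot{a}_{0})$ forces $\dot{b}$ to be a new cofinal branch, I would use the $\kappa^{+}$-closure of $\mathbb{B}$ to build a binary splitting tree $\{(u_{s}, \dot{a}_{s}, t_{s}) : s \in 2^{<\kappa^{+}}\}$ below $(u_{0}, \dot{a}_{0})$, where $u_{s} \in \mathbb{B}$, $\dot{a}_{s}$ is a $\mathbb{B}$-name for a condition in $\mathbb{A}$, $t_{s} \in T$ is a ground-model node of $T$-level at least $|s|$, $(u_{s}, \dot{a}_{s}) \Vdash t_{s} \in \dot{b}$, and the nodes $t_{s^{\frown}0}, t_{s^{\frown}1}$ are $T$-incomparable (possible because $\dot{b}$ is forced to be new); at limit levels I would take greatest lower bounds of the $u_{s \upharpoonright \alpha}$'s using closure. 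Each of the $2^{\kappa^{+}}$ branches $f \in 2^{\kappa^{+}}$ through this splitting tree yields a chain $C_{f} := \{t_{f \upharpoonright \alpha} : \alpha < \kappa^{+}\}$ in $T$ that lies already in $V[G_{\mathbb{R}}]$, of cofinality $\kappa^{+}$; the incomparability at each split forces $C_{f} \ne C_{g}$ whenever $f \ne g$, so (normalising $T$ if need be) we obtain at least $2^{\kappa^{+}} \geq \kappa^{++}$ distinct nodes at level $\kappa^{+}$ of $T$, contradicting $|T_{\kappa^{+}}| \leq \kappa^{+}$.

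The hard part is the limit-stage construction: $\mathbb{B}$ has greatest lower bounds by closure, but $\mathbb{A}$ need not, so the $\mathbb{B}$-names $\dot{a}_{s}$ at limit levels must be chosen carefully so that $(u_{s}, \dot{a}_{s})$ remains a genuine condition that still forces $t_{s} \in \dot{b}$. I would handle this via the maximal-completeness technique already employed in Lemmas 4.2 and 5.2, exploiting the explicit compatibility criterion for conditions in the Knaster quotient $\mathbb{K}$ recorded in Claim 5.4 to control how an extension of $u_{s}$ in $\mathbb{B}$ interacts with the $\mathbb{K}$-side names. Once branch preservation for $\mathbb{U}^{*} \ast \dot{\mathbb{K}}$ is established, the projection from Lemma 5.2 transports the conclusion to $j(\mathbb{R})/G_{\mathbb{R}}$, yielding $b \in V[G_{\mathbb{R}}]$ and hence $TP(\kappa^{++})$ in $V[G_{\mathbb{R}}]$.
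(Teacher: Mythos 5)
Your reduction is exactly the paper's: via the projection of Lemma 5.2 you pass from $j(\mathbb{R})/G_{\mathbb{R}}$ to $\bigl((j(\mathbb{P})\ast\dot{\mathbb{Q}}^{M}_{j(\mathcal{U})})/G_{\mathbb{P}\ast\dot{\mathbb{Q}}}\bigr)\times\mathbb{U}^{*}$, with the one factor $\kappa^{+}$-Knaster by Lemma 5.3 and the other $\kappa^{+}$-closed. The divergence is in the last step. The paper simply applies the two standard branch-preservation lemmas in succession: the Knaster lemma over $V[G_{\mathbb{R}}][K]$ pulls $b$ into $V[G_{\mathbb{R}}][K]$, and then Silver's lemma for $\kappa^{+}$-closed forcing over $V[G_{\mathbb{R}}]$, where $2^{\kappa}=\lambda=\kappa^{++}$ by Lemma 4.6(3), pulls $b$ into $V[G_{\mathbb{R}}]$. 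You instead try to prove a single combined lemma for $\mathbb{B}\ast\dot{\mathbb{A}}$ by one splitting-tree construction, and as sketched it has a genuine gap: to know that the nodes $t_{f\upharpoonright\alpha}$ are pairwise $T$-comparable along a branch $f$, you need the conditions $(u_{s},\dot a_{s})$ to be decreasing along that branch, but the Knaster coordinate has no closure at all, so already at limits of cofinality $\omega$ the decreasing sequence of names $\dot a_{f\upharpoonright\alpha}$ need have no lower bound. Your proposed repair does not address this obstruction: maximal completeness (mixing) builds one name out of an antichain of decided values, it does not manufacture lower bounds in a non-closed forcing; and Claim 5.4 is a membership/compatibility criterion for the quotient, which is provably not $\kappa^{+}$-closed --- it adds new subsets of $\kappa$, for instance the Cohen sets indexed in $[\lambda,j(\lambda))$. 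The known direct proofs of your combined lemma avoid carrying $\mathbb{A}$-conditions altogether: they build the splitting tree on the closed coordinate only and use the $\kappa^{+}$-c.c.\ of $\mathbb{A}$ merely to bound, over a fixed closed-side condition, the set of nodes forceable into $\dot{b}$.

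There is also a calibration error: your splitting tree has height $\kappa^{+}$, but $\mathbb{U}^{*}$ is only $\kappa^{+}$-closed, so a decreasing branch of length $\kappa^{+}$ of closed-side conditions need not have a lower bound either; without a single condition below the whole branch, nothing forces all of $C_{f}$ into $\dot{b}$, and a chain of length $\kappa^{+}$ in a $\kappa^{++}$-tree need not have any upper bound at level $\kappa^{+}$ (normality does not provide upper bounds for chains --- that is precisely how branches die), so the count against $\lvert T_{\kappa^{+}}\rvert\leq\kappa^{+}$ does not go through. The correct Silver-style bookkeeping uses a tree of height $\kappa$: its $2^{\kappa}=\kappa^{++}$ branches do have lower bounds by $\kappa^{+}$-closure, and extending each bound to decide $\dot{b}$ up to the supremum $\gamma^{*}<\kappa^{++}$ of the levels used yields $\kappa^{++}$ distinct nodes in $T_{\gamma^{*}}$, contradicting $\lvert T_{\gamma^{*}}\rvert\leq\kappa^{+}$. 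So: right decomposition and right two ingredients, but the hybrid splitting-tree argument fails at both the limit stages and the top; replace it either by the paper's sequential two-lemma argument or by the closed-coordinate-only splitting argument just described.
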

\begin{proof}
Note that $V[G_{\mathbb{R}}]\subseteq V[G_{j(\mathbb{R})}]$ as witnessed by the projection $\varrho: j(\MRB) \to \MRB.$
The follwoing also holds  in $V[G_{\MRB}]$, thanks to Lemma 5.2.
\[
V^{j(\MRB)/ G_{\MRB}} \subseteq V^{[(j(\mathbb{P})\ast\dot{\mathbb{Q}}^{M}_{j(\mathcal{U})})/(\mathbb{P}\ast\dot{\mathbb{Q}})]\ast\mathbb{U}^*}.
\]
It follows that $b \in V[G_{\MRB}][H\ast K],$ where $H\ast K$ is $[(j(\mathbb{P})\ast\dot{\mathbb{Q}}^{M}_{j(\mathcal{U})})/(\mathbb{P}\ast\dot{\mathbb{Q}})]\ast\mathbb{U}^*$-generic over $V[G_{\MRB}]$.
On the other hand:
\begin{itemize}
\item
$\mathbb{U}^*$ is $\kappa^{+}$-closed in $V[G_{\MRB}]$ (by Lemma 5.2).
\item
$(j(\mathbb{P})\ast\dot{\mathbb{Q}}^{M}_{j(\mathcal{U})})/G_{\mathbb{P}\ast\dot{\mathbb{Q}}}$ is $\kappa^+$-Knaster in $V[G_{\mathbb{R}}\ast K].$
\end{itemize}
It follows that neither $\mathbb{U}^*$, nor $(j(\mathbb{P})\ast\dot{\mathbb{Q}}^{M}_{j(\mathcal{U})})/G_{\mathbb{P}\ast\dot{\mathbb{Q}}}$ can add a branch through T, hence

 $b \in V[G_{\MRB}]$ as required.
\end{proof}
\section{Acknowledgements}
The authors would like to thank the anonymous referee for his/her careful reading and useful suggestions. The first author's research has been supported by a grant from IPM (No. 96030417). The second author's research has been partially supported by IPM.

School of Mathematics, Institute for Research in Fundamental Sciences (IPM), P.O. Box:
19395-5746, Tehran-Iran.

E-mail address: golshani.m@gmail.com
\\

Equipe de Logique Math\' ematique, Institut de Math\' ematiques de Jussieu, Universit\' e Paris Diderot, Paris, France.

E-mail address: rahmanmohammadpour@gmail.com

\end{document}